\renewcommand{\geq}{\geqslant}
\renewcommand{\leq}{\leqslant}
\newtheorem{theorem}{Theorem}
\newtheorem{proposition}{Proposition}[section]
\newtheorem{lemma}[proposition]{Lemma}
\newtheorem*{main-theorem}{Main Theorem}
\newtheorem*{theorem*}{Theorem}
\theoremstyle{definition}
\newtheorem{remark}[proposition]{Remark}
\newtheorem*{remark*}{Remark}
\numberwithin{equation}{section}
\def\phi{\varphi}
\def\reals{{\mathbb R}}
\def\Ci{{\mathcal C}^\infty}
\def\WF{\mathrm{WF}}
\def\supp{\mathrm{supp}\,}
\def\O{{\mathcal O}}
\def\phi{\varphi}
\def\be{\begin{eqnarray*}}
\def\ee{\end{eqnarray*}}
\def\ben{\begin{eqnarray}}
\def\een{\end{eqnarray}}
\def\L2R{L_{\text{Rest}}^2}
\def\11{\mathds{1}}
\def\tpsi{\tilde{\psi}}
\def\L2c{L^2_{\text{comp}}}
\def\tDelta{\widetilde{\Delta}}
\def\tP{\widetilde{P}}
\def\tR{\tilde{R}}
\def\tE{\widetilde{E}}
\def\p{\partial}
\def\GG{\mathcal{G}}
\def\tY{\tilde{Y}}
\def\tOmega{\tilde{\Omega}}
\begin{document}

\title[Non-concentration]{A non-concentration estimate for partially rectangular billiards}

\author[H. Christianson]{Hans Christianson}

\email{hans@math.unc.edu}
\address{Department of Mathematics, UNC-Chapel Hill \\ CB\#3250
  Phillips Hall \\ Chapel Hill, NC 27599}

\subjclass[2010]{}
\keywords{}

\begin{abstract}

We consider quasimodes on planar domains with a partially rectangular
boundary.  We prove that for any $\epsilon_0>0$, 
an $\O( \lambda^{-\epsilon_0})$ quasimode must have $L^2$ mass in the
``wings'' (in phase space) bounded below by $\lambda^{-2-\delta}$ for any $\delta>0$.
The proof uses the  author's recent work on 0-Gevrey smooth domains to
approximate quasimodes on $C^{1,1}$ domains.  There is an 
improvement for $C^{k,\alpha}$ and $C^\infty$ domains.

\end{abstract}

\maketitle

\section{Introduction}
\label{S:intro}



In this paper, we consider the famed Bunimovich stadium (and
similar partially rectangular billiards) and prove that weak
quasimodes must spread into the ``wings'' of the domain (at least in
phase space).  This type of
result is not new, however 
the lower bound on the quasimode mass in the
wings is a significant improvement over what is previously known, and
the additional phase space information appears to be new.  

We begin by describing the geometry.  
Let $\Omega \subset \reals^2$ be a planar domain with $C^{1,1}$ (or
$C^{k, \alpha}$ for $k + \alpha >2$), piecewise
$C^\infty$ boundary $\Gamma = \p \Omega$, and let $R = [-a,a]
\times [-\pi, \pi] \subset \reals^2$ be a rectangle with boundary
consisting of the two sets of parallel segments $\p R = \Gamma_1 \cup
\Gamma_2$, with $\Gamma_1 = [-a,a] \times \{ \pi \} \cup [-a,a] \times
\{ - \pi \}$.  Assume $R \subset \Omega$ and $\Gamma_1 \subset \p
\Omega$ but $\mathring{\Gamma}_2 \cap \p \Omega = \emptyset$.  We
assume that for $(x, y)$ in a neighbourhood of $R$, $\Gamma = \p
\Omega$ is symmetric about the line $y = 0$.  Let $Y(x) = \pi + r(x)$ be a graph parametrization of the boundary curve $\p \Omega$ for
$(x,y)$ near $[-a,a] \times \{ \pi \}$, that is the upper boundary
near the rectangular part.  In order to make what follows nontrivial,
let us assume that $r(x)$ is a $C^{1,1}$, piecewise $C^\infty$ function
with $r''(x) \neq 0$ for $\pm x \geq a$.  That is, the wings open or
close as you move away from the rectangular part.  For example, the
famed Bunimovich stadium is $C^{1,1}$, satisfying these assumptions  with $r(x) = (\pi^2 -
(x+a)^2)^{1/2} - \pi $ for $x$ to the left of the rectangular part
(see Figure \ref{fig:stadium}).



We consider quasimodes 
near the rectangular part, and show that the
mass in phase space in the wings is bounded below by $\lambda^{-2-\delta}$ for any
$\delta>0$.   

\begin{theorem}

\label{T:stad-non-conc}
Suppose $\Gamma$ is $C^{1,1}$ and for some $\epsilon_0>0$ $u$
satisfies the equation
\begin{align}
\label{E:qm-eqn}
\begin{cases}
-\Delta u = \lambda^2 u + E(\lambda ) \| u \|, \\
u |_\Gamma = 0,
\end{cases}
\end{align}
where $E(\lambda) = \O(\lambda^{-\epsilon_0} )$.
Then for any $\delta>0$, there exists $c_\delta>0$ such
that at least one of the following three inequalities is true: 
\begin{equation}
\label{E:lower-1}
\| u \|_{L^2(\Omega \setminus R)}  \geq  c_\delta \lambda^{-2-\delta}
\| u \|_{L^2(\Omega) },
\end{equation}
\begin{equation}
\label{E:lower-2}
\| \lambda^{-1} D_x u \|_{L^2( \Omega \setminus R)} \geq c_\delta \lambda^{-2-\delta}
\| u \|_{L^2(\Omega) },
\end{equation}
or 
\begin{equation}
\label{E:lower-3}
\| (\lambda^{-1} D_x)^2 u \|_{L^2( \Omega \setminus R)}  \geq c_\delta \lambda^{-2-\delta}
\| u \|_{L^2(\Omega) }.
\end{equation}

\end{theorem}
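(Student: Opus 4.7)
The plan is to argue by contradiction. Suppose all three inequalities fail, so that $u$ together with $\lambda^{-1}D_x u$ and $(\lambda^{-1}D_x)^2 u$ each have $L^2(\Omega\setminus R)$ norm strictly less than $c_\delta\lambda^{-2-\delta}\|u\|_{L^2(\Omega)}$, with $c_\delta>0$ to be chosen. My goal is to show this forces $\|u\|_{L^2(\Omega)}$ itself to be too small to match the normalization, a contradiction with $u\neq 0$. First I would invoke the $0$-Gevrey approximation from the author's earlier work to replace $\partial\Omega$ by a smoother boundary, absorbing the approximation error into the quasimode tolerance $E(\lambda)=O(\lambda^{-\epsilon_0})$; this puts semiclassical microlocal analysis near the two transition segments $\{x=\pm a\}\cap R$ on firm ground, despite the ambient $C^{1,1}$ regularity.

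Next I would transfer smallness in the wings to smallness of the Cauchy data on the transition segments. A one-sided neighborhood of each of $\{x=\pm a\}\cap R$ lies inside $\Omega\setminus R$, where $u$ is smooth and satisfies $-\Delta u=\lambda^2 u+O(\lambda^{-\epsilon_0})\|u\|$. Interior-to-boundary elliptic regularity then converts the three $L^2$ hypotheses into bounds for the Dirichlet, Neumann, and second-$x$-derivative traces at $x=\pm a$, each of order $\lambda^{-2-\delta/2}\|u\|$ up to harmless powers of $\lambda$. This is where the trichotomy is forced: without inequality \eqref{E:lower-2} one has no Neumann control, and without \eqref{E:lower-3} no control on the second-order $x$-trace, which is needed when one substitutes the Helmholtz equation to trade $D_y^2$ for $D_x^2$ on the boundary.

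On $R$ I would then separate variables, writing $u(x,y)=\sum_{k\geq 1} u_k(x)\phi_k(y)$ in the $y$-Dirichlet basis $\phi_k(y)=\sin(k(y+\pi)/2)$. Each coefficient satisfies
\[
-u_k''(x)-\mu_k^2\,u_k(x)=F_k(x),\qquad \mu_k^2:=\lambda^2-(k/2)^2,\quad x\in[-a,a],
\]
with $\sum_k\|F_k\|_{L^2([-a,a])}^2\leq C\lambda^{-2\epsilon_0}\|u\|^2$. Variation of parameters together with the Cauchy data from the previous step bounds $\|u_k\|_{L^2([-a,a])}$: elliptic modes $\mu_k^2\ll 0$ are exponentially suppressed, deeply hyperbolic modes $\mu_k^2\gtrsim\lambda^{-\epsilon_0/2}$ are controlled by an oscillatory Green's kernel that absorbs into the small Cauchy data and $F_k$, and only a narrow shell of indices near the bouncing-ball threshold $k\approx 2\lambda$ can survive. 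Counting this shell using the quasimode scale and applying Parseval in $y$ yields $\|u\|_{L^2(R)}^2\ll\|u\|_{L^2(\Omega)}^2$, contradicting $u\neq 0$.

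The hardest step is the analysis of the near-bouncing-ball shell: these modes concentrate in $R$ and are essentially invisible to wing-based control, so only their multiplicity, not their amplitude, helps. Bounding this multiplicity sharply depends on the quasimode scale $O(\lambda^{-\epsilon_0})$, and the freedom to take $\delta>0$ arbitrarily small in the conclusion is precisely what absorbs the polynomial loss incurred in the shell count. A secondary technical difficulty is that, since microlocal calculi on $C^{1,1}$ boundaries are ill-behaved, every commutator and trace identity must be carried out on the $0$-Gevrey approximation and then transferred back with quantitative control uniform across all three $x$-derivative orders; this is the principal role played by the author's earlier work on $0$-Gevrey domains in the argument.
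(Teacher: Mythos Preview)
Your outline is quite different from the paper's proof, and while the separation-of-variables idea is natural, there are genuine gaps.

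The paper does not take traces at $\{x=\pm a\}$ nor analyze Fourier modes $u_k$ on $R$ directly. It straightens the boundary near $R$ by $(x,y)\mapsto (x,\,y/Y(x))$, computes the transformed Laplacian $\widetilde\Delta$, and proves a lemma giving bounds on $\partial_y v$, $\partial_x v$, $\partial_x\partial_y v$, $\partial_y^2 v$, $\partial_x^2 v$ localized to a thin strip $\{a\le x\le a+c\lambda^{-1-\epsilon_0/2}\}$; the three contradiction hypotheses on $u$, $D_x u$, $D_x^2 u$ enter here through an integration-by-parts bootstrap that closes on $\|\partial_x^2 v\|$. These estimates show that a cutoff $\chi v$ supported in this strip is an $O(\lambda^{-\epsilon_0/2})$ quasimode for the \emph{reduced} operator $-\partial_x^2 - Y^{-2}\partial_y^2$, and since $Y^{-2}-\widetilde Y^{-2}=O(\lambda^{-2-\epsilon_0})$ on $\mathrm{supp}\,\chi$ for any $0$-Gevrey $\widetilde Y$ agreeing with $\pi$ on $[-a,a]$ and opening outward, $\chi v$ is equally a quasimode for that auxiliary $0$-Gevrey problem. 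Theorem~3 of \cite{Chr-inf-deg} then forces $\chi v=O(\lambda^{-\infty})$. In particular, the $0$-Gevrey domain is \emph{not} an approximation of $\partial\Omega$; it is a freely chosen comparison operator invoked only after the boundary has been flattened and the cutoff applied.

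Two concrete gaps in your route. First, for a domain whose wings close (e.g.\ the stadium), the one-sided strip $\{a<x<a+\epsilon\}$ does not contain $\{a\}\times[-\pi,\pi]$ in its interior: the endpoints $(a,\pm\pi)$ are corners of $\partial\Omega$, and the available $x$-width in the wing collapses to zero as $y\to\pm\pi$. Your interior-to-boundary trace inequality therefore needs a Hardy-type weight near the corners and quantitative control of $\partial_y u$ in the wing, which is not among your hypotheses; supplying this is essentially what the paper's derivative lemma does after straightening, and it is where most of the work lies. Second, your shell analysis is off-target: near-threshold modes with $\mu_k\approx 0$ are \emph{not} invisible to Cauchy data, since the kernel $\mu_k^{-1}\sin(\mu_k(x-s))$ stays uniformly bounded and gives $\|u_k\|\le C(|u_k(a)|+|u_k'(a)|+\|F_k\|)$ directly. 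No multiplicity count is needed, and conversely a pure count with amplitudes bounded trivially by $1$ could never produce smallness. The real crux is obtaining the Cauchy-data bound, which brings you back to the corner difficulty above.
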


\begin{remark}
The estimate \eqref{E:lower-1} gives a lower bound on the $L^2$ mass
in the wings, while the estimates (\ref{E:lower-2}-\ref{E:lower-3})
give lower bounds on the mass in phase space, since the quasimode
equation tells us the function $u$ is already semiclassically
localized to the cosphere bundle.  Moreover, if one of
(\ref{E:lower-2}-\ref{E:lower-3}) is true, it is expected there is
some lower bound on $u$ in the wings as well, since having a large
$x$-derivative suggests there is lateral propagation.  We hope to
explore this further in later works.

The proof has a control theory flair to it; it goes by a contradiction
argument considering the mass in a $\lambda$-dependent strip just
outside the rectangular part.  
\end{remark}

\begin{remark}
We remark that the real difficulty in improving such estimates 
is the lack of regularity at the boundary of
$R$.  That is, if $\Gamma$ is smoother, we can 
improve the above estimates.  We prove a general result for $C^{k, \alpha}$
domains in Theorem \ref{C:stad-non-conc-2} below.  For a more extreme example, see
\cite[Theorem 3]{Chr-inf-deg} for a case with 0-Gevrey regularity.
\end{remark}

\begin{remark}
Theorem \ref{T:stad-non-conc} improves on the current state of the art
for quasimodes 
in 
\cite{BHW-spread} (see also \cite{BuZw-ball}) by improving $\lambda^{-4}$ to $\lambda^{-2-\delta}$.
We remark that for ``honest'' eigenfunctions (as opposed to
quasimodes), in \cite{BHW-spread} a $\lambda^{-2}$ lower bound is
proved, and in 
\cite{HiMa-p-rect}, this is improved to $\lambda^{-5/3 -}$ under an
additional 
spectral non-resonance assumption.
\end{remark}

\begin{figure}
\hfill
\centerline{\input{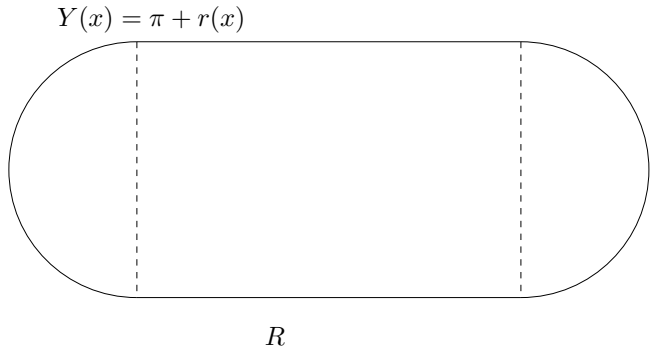}}
\caption{\label{fig:stadium} The  Bunimovich stadium.  The
  rectangular part is in the middle.  The boundary is $C^\infty$
  except at the four corners of the rectangle, where it is $C^{1,1}$.  The
main result of this work is that quasimodes must spread into the
semicircular ``wings'' outside the rectangle.}
\hfill
\end{figure}

In the following Theorem, we improve the lower bound in the case of
a $C^{k, \alpha}$ boundary for $0 \leq \alpha \leq 1$.  The improvement
is that the exponent in the lower bound will be smaller than $2$
provided $k +\alpha >2$.

\begin{theorem}

\label{C:stad-non-conc-2}
In addition to the assumptions of Theorem \ref{T:stad-non-conc},
assume the boundary $\Gamma$ is $C^{k, \alpha}$ for some $0 \leq
\alpha \leq 1$, $k + \alpha>2$.  
For  $\epsilon_0>0$, 
let $u$ satisfy
\[
\begin{cases}
-\Delta u = \lambda^2 u + E(\lambda ) \| u \|, \\
u |_\Gamma = 0,
\end{cases}
\]
where $E(\lambda) = \O(\lambda^{-\epsilon_0} )$.

Set
\[
s_\delta = 1+ \max \left\{ \frac{1}{k + \alpha}, \frac{1 +
    \delta}{2(k + \alpha) -3} \right\} + \delta.
\]
Then for any $\delta>0$, 
there exists $c = c_\delta >0$ such that at least one of the following
three 
inequalities holds true:
\begin{equation}
\label{E:lower-1a}
\| u \|_{L^2(\Omega \setminus R)}  \geq  c_\delta \lambda^{-s_\delta}
\| u \|_{L^2(\Omega) },
\end{equation}
\begin{equation}
\label{E:lower-2a}
\| \lambda^{-1} D_x u \|_{L^2(\Omega \setminus R)} \geq c_\delta \lambda^{-s_\delta}
\| u \|_{L^2(\Omega) },
\end{equation}
or
\begin{equation}
\label{E:lower-3a}
\| (\lambda^{-1} D_x)^2 u \|_{L^2(\Omega \setminus R)} \geq c_\delta \lambda^{-s_\delta}
\| u \|_{L^2(\Omega) }.
\end{equation}

In particular, if $\Gamma$ is $C^\infty$, then at least one of
(\ref{E:lower-1a}-\ref{E:lower-3a}) holds with $s_\delta = 1+\delta$ for
any $\delta>0$.


\end{theorem}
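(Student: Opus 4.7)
The strategy is to mimic the proof of Theorem \ref{T:stad-non-conc}, but to replace the $C^{k,\alpha}$ boundary by an auxiliary smoother approximation and track how the gain in boundary regularity improves the scale at which one can run the contradiction argument. Extra regularity makes the approximation error smaller, which permits a finer $\lambda$-dependent strip in the contradiction step and in turn improves the exponent from $2+\delta$ toward $1+\delta$.

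First I would regularize the boundary. Let $r_\epsilon = r * \rho_\epsilon$ for a standard mollifier at scale $\epsilon$, to be optimized as a negative power of $\lambda$. Standard mollification estimates give $\|r - r_\epsilon\|_{C^0} \lesssim \epsilon^{k+\alpha}$ while $\|\p_x^j r_\epsilon\|_{C^0} \lesssim \epsilon^{k+\alpha-j}$ for $j > k+\alpha$. Define an auxiliary domain $\Omega_\epsilon$ whose upper boundary near the rectangular part is the graph $y = \pi + r_\epsilon(x)$ and which coincides with $\p\Omega$ outside a neighborhood of the corners. Then transport $u$ to $\Omega_\epsilon$ via a near-identity $C^{1,1}$ diffeomorphism $\Phi_\epsilon$ supported near $\p R$. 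In the new coordinates, $\tilde u = u\circ \Phi_\epsilon^{-1}$ satisfies
\[
-\Delta_{g_\epsilon} \tilde u = \lambda^2 \tilde u + F_\epsilon,
\]
where $g_\epsilon$ is the pulled-back metric (Euclidean up to terms controlled by the boundary deformation) and $F_\epsilon$ absorbs the original quasimode error plus the metric-perturbation error.

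Next I would apply the non-concentration technology on the smoother domain $\Omega_\epsilon$, combining the control-theoretic contradiction argument from Theorem \ref{T:stad-non-conc} with the 0-Gevrey refinements of \cite{Chr-inf-deg}. Testing $\tilde u$ against a cutoff supported in a $\lambda^{-\tau}$-strip just outside $R$, if \eqref{E:lower-1a}-\eqref{E:lower-3a} all fail then the contradiction argument on $\Omega_\epsilon$ produces a lower bound in the wings of size $\lambda^{-1-\tau-\delta}\|u\|$, provided $\lambda^{-\epsilon_0}$ and the metric-perturbation error are both strictly beaten by this target. The heart of the proof is then the joint balancing of $\epsilon = \lambda^{-\rho}$ and the strip exponent $\tau$. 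Two constraints arise: (i) the mollification error must beat the target, which after accounting for the $\lambda^2$ scaling of the Laplacian forces a relation of the form $\rho(k+\alpha) \gtrsim \tau$, producing the $1/(k+\alpha)$ term in the max; (ii) the propagation/elliptic estimate on the strip must remain effective when the derivatives of $r_\epsilon$ on that strip are the dominant bottleneck, giving an inequality of the form $(2(k+\alpha)-3)\tau \gtrsim 1+\delta$. Taking the worse of the two and including $\delta$ losses for logarithmic factors yields
\[
s_\delta = 1 + \max\left\{ \frac{1}{k+\alpha},\ \frac{1+\delta}{2(k+\alpha)-3} \right\} + \delta,
\]
and sending $k+\alpha \to \infty$ makes both terms in the max vanish, leaving $s_\delta = 1+\delta$ in the $C^\infty$ case.

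Finally, the lower bounds on $\tilde u$ and its $D_x$-derivatives in the wings of $\Omega_\epsilon$ transfer back to lower bounds on $u$, $\lambda^{-1}D_x u$, and $(\lambda^{-1}D_x)^2 u$ in $\Omega\setminus R$ via $\Phi_\epsilon$, which is a near-identity $C^{1,1}$ diffeomorphism with uniformly controlled Jacobian, losing only absolute constants. The main obstacle is the joint optimization in $(\rho,\tau)$: the coupling is delicate because taking $\epsilon$ too small produces unacceptably large derivatives of $r_\epsilon$ on the strip and degrades the local reduction to a Euclidean model, while $\epsilon$ too large produces too coarse an approximation of $\p\Omega$. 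Extracting the precise pair of exponents requires a quantitative tracking of constants in the Agmon-type and propagation estimates used in the proof of Theorem \ref{T:stad-non-conc}, and I expect this to be the bulk of the technical work.
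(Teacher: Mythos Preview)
Your approach via mollification of the boundary is not the route taken in the paper, and it misidentifies the mechanism behind the improvement. The paper does not regularize the boundary at all. Instead it works directly with the given $C^{k,\alpha}$ function $r$ and exploits a single elementary consequence of the regularity: since $r \equiv 0$ on $[-a,a]$ and $r \in C^{k,\alpha}$, one has $r(x) = O(|x \mp a|^{k+\alpha})$ and hence $Y'(x) = O(|x \mp a|^{\gamma})$ with $\gamma = k+\alpha-1>1$. After the same boundary-straightening change of variables as in Theorem~\ref{T:stad-non-conc}, the perturbation coefficients in $\tDelta$ satisfy $|A| \leq C R^{2\gamma}$ and $|B| \leq C R^\gamma$ (with $R$ the ramp function), and this higher-order vanishing is fed directly into an analogue of Lemma~\ref{L:der-1} (the paper's Lemma~\ref{L:der-2}) with the cutoff $\psi$ supported on a strip of width $\lambda^{-p}$ for some $p<1$. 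One then chooses $p$ so that every commutator and perturbation term arising in the passage from $\tDelta$ to the separable model and then to the $0$-Gevrey comparison operator is $O(\lambda^{-\delta_0})$; two of these terms force respectively $p(\gamma+1)\geq 1$ and $p(2\gamma-1)\geq 1+\delta_0$, and setting $s_{\delta_0}=1+p+\delta_0$ yields exactly the stated formula.

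The gap in your proposal is that mollification does not improve the order of vanishing of $r$ at $x=\pm a$; it only produces global $C^\infty$ regularity with $\lambda$-dependent derivative bounds. Your mollified $r_\epsilon$ still vanishes only to order $k+\alpha$ at the corners (and a naive convolution will not even preserve $r_\epsilon\equiv 0$ on $[-a,a]$, destroying the rectangular structure the whole argument relies on). Consequently you cannot invoke the $C^\infty$ case of the theorem on $\Omega_\epsilon$: that case implicitly uses infinite-order vanishing at the corners, equivalently $\gamma=\infty$. Tracking the $\lambda$-dependent blowup of $\partial_x^j r_\epsilon$ through the argument would at best reproduce the direct estimates above with no gain and considerable extra bookkeeping from the diffeomorphism $\Phi_\epsilon$. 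Your constraints (i) and (ii) appear reverse-engineered from the exponent in the statement rather than derived; in particular I do not see how the $C^0$ mollification error $\epsilon^{k+\alpha}$ enters the quasimode equation in a way that produces the relation you state between $\rho$ and $\tau$.
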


\begin{remark}
We pause to remark that the lower bound of $\lambda^{-1-\delta}$ in
the $C^\infty$ case agrees with the lower bound on $L^2$ mass in
rotationally invariant neighbourhoods on 0-Gevrey surfaces of
revolution proved by the author in \cite{Chr-surf-rev}.

\end{remark}

\subsection*{Acknowledgements}
The author would like to thank Michael Taylor for pointing out a
mistake in notation in an earlier version, as well as suggesting
writing out the improvement in the $C^{k,\alpha}$ case.  He would also
like to thank Luc Hillairet for pointing out a mistake in an earlier
version of this paper - indeed this is a much more delicate problem
than he initially thought! 
The author is 
supported in part by NSF grant DMS-0900524.

\section{History of the problem}
The study of eigenfunctions in partially rectangular domains,
especially the Bunimovich stadium, is interesting and important for
many reasons.  The main reason why one might be interested in the
properties of eigenfunctions on the Bunimovich stadium is the
elegant simplicity of the domain.  It has enough symmetries that one
might expect to use to simplify the problem, yet the eigenfunctions have no
known closed form, nor do quasimodes take any standard form.  

The broken geodesic flow (or billiard map) on the Bunimovich stadium
is ergodic, meaning that the only invariant measures have either full
or zero measure.  This means that as a classical dynamical system, the
geodesic flow mixes things up in phase space.  As eigenfunctions tend
to concentrate along invariant sets, this suggests that the
eigenfunctions must be uniformly distributed in phase space.  Such
results are known as {\it quantum ergodicity}.  First stated by 
\v{S}nirel$'$man\cite{Sni-qe} and proved in the case of negative curvature
by Zelditch \cite{Zel1}, smooth ergodic flow by Colin de Verdi\`ere
\cite{CdV}, and for boundary value problems (such as the Bunimovich
stadium) by G\'erard-Leichtnam \cite{GeLe-qe}; quantum ergodicity
states that a density one sequence of eigenfunctions (or quasimodes) does
equidistribute in phase space.  None of these results precludes the
existence of an exceptional subsequence of eigenfunctions or quasimodes (of density
zero) which scar, or concentrate on smaller invariant sets.

For example, on a hyperbolic cylinder, it is known \cite{CdVP-I} that
quasimodes can concentrate at a logarithmic rate on a single unstable periodic geodesic.
This concentration is known to be sharp as well (see
\cite{Chr-NC,Chr-NC-erratum,Chr-QMNC}).  Other possible concentration
rates are studied for 0-Gevrey smooth partially rectangular billiards
in \cite{Chr-inf-deg} and on Gevrey smooth surfaces of revolution in
\cite{Chr-surf-rev}.
For partially rectangular billiards, there is a relatively large (but
still measure zero) invariant set, referred to as the
``bouncing-ball'' set; the broken periodic geodesics reflecting off of
the flat rectangular part.  It is still measure zero in phase space,
because only the vertical directions remain bouncing-ball
trajectories.  It is a subject of much debate whether there exist
eigenfunctions or quasimodes which concentrate in the rectangular
part, and how fast they concentrate.  A result of Hassell
\cite{Has-nque} informs us that the Bunimovich stadium generically lacks 
quantum {\it unique} ergodicity in the sense that there exist
exceptional sequences of eigenfunctions which do not equidistribute.
This means understanding the methods and location of scarring (or
non-concentration, as in this article) is a very rich subject.

\section{Proof of Theorem \ref{T:stad-non-conc}}
\begin{proof}

The proof proceeds by contradiction.  Suppose the statement is false
and there exists $\delta_0>0$ such that 
\begin{align}
\| u \|_{L^2(\Omega \setminus R)}  & \leq    \lambda^{-2-\delta_0}
\| u \|_{L^2(\Omega) }, \label{E:small-1} \\
\| D_x u \|_{L^2(\Omega \setminus R)} & \leq   \lambda^{-1-\delta_0}
\| u \|_{L^2(\Omega) }, \text{ and }\label{E:small-2} \\
\| D_x^2 u \|_{L^2(\Omega \setminus R)} & \leq   \lambda^{-\delta_0}
\| u \|_{L^2(\Omega) }, \label{E:small-3} 
\end{align}
This means that the quasimode $u$ and its derivatives are small in an
appropriate sense in the wings.  This will lead to a contradiction.

We observe that shrinking $\epsilon_0>0$ or $\delta_0>0$ only strengthens the
statement of the theorem, applying to weaker quasimodes, and
contradicting a weaker statement in (\ref{E:small-1}-\ref{E:small-3}).  Hence we will allow ourselves to shrink
$\epsilon_0>0$ and $\delta_0>0$ several times in the course of the
proof. 

We now straighten the boundary near
the rectangular part so that we may approximately
separate variables as in the proof of \cite[Theorem 3]{Chr-inf-deg}.  As much of
the work is done in that work, we only fill in the details of how to
replace our domain with a 0-Gevrey domain and apply the results of
\cite[Theorem 3]{Chr-inf-deg}.

The
boundary $\Gamma$ near $R$ is given by $y = \pm Y(x) = \pm ( \pi +
r(x) )$ for $x \in [-a-\delta, a + \delta]$ for some $\delta>0$.
Write $P_0 = -\p_x^2 -\p_y^2$ for the flat Laplacian.  

We straighten the boundary near $R$ and compute the corresponding
change in the metric.  From this we will get a non-flat
Laplace-Beltrami operator which is almost separable.  This introduces
some non-trivial curvature, which is unfortunately not smooth, as
$Y(x)$ is not smooth, so we then
conjugate to a new flat problem, and then compare to the 0-Gevrey
case.  
We
change variables $(x, y) \mapsto (x', y')$  locally near the rectangular part:
\[
\begin{cases}
x = x', \\
y =  y' Y(x').
\end{cases}
\]
Thus when $y = \pm Y(x) = \pm Y(x')$, $y' = \pm 1$.  We have
\begin{align*}
g & = dx^2 + dy^2 \\
& = (dx')^2 + (Y dy' + y' Y'(x') dx')^2 \\
& = (1 + A)(dx')^2 + 2B dx' dy' + Y^2 (dy')^2,
\end{align*}
where 
\[
A = (y' Y'(x'))^2,
\]
and
\[
B = y' Y' Y.
\]
We pause to observe that $A$ is quadratic in $y'$ and $Y'(x')$ and $B$
is linear in $y$ and $Y'(x')$.

In matrix notation, 
\[
g = \left( \begin{array}{cc} 1 + A & B \\ B &  Y^2 \end{array}
\right).
\]
Let us drop the cumbersome $(x', y')$ notation and write $(x,y)$
instead.  In order to compute $\Delta_g$ in these coordinates, we need
$| g|$ and $g^{-1}$.  We compute
\begin{align*}
| g | & = Y^2 (1 + A) - B^2 \\
& = Y^2 + y^2 Y^2 (Y')^2 - y^2 Y^2 (Y')^2 \\
& = Y^2.
\end{align*}
Hence
\[
g^{-1} = Y^{-2} \left( \begin{array}{cc} Y^2 & - B \\ -B &  1 + A \end{array}
\right).
\]
For our quasimode $u$ as above, we have after a tedious computation 
\begin{align*}
-\Delta_g u & = -\Big( \p_x^2 + Y^{-2} (1 + A) \p_y^2 + Y' Y^{-1} \p_x
- 2B Y^{-2} \p_x \p_y \\
& \quad 
-Y^{-1} (B/Y)_x \p_y -Y^{-1} (B/Y)_y \p_x + Y^{-1} ((1 + A)/Y)_y \p_y
\Big) u .
\end{align*}
The boundary condition is now 
\[
u|_{ y = \pm 1 } = 0,
\]
locally near the rectangular part $R$.


We observe now that the coefficient of $\p_x$ is
\begin{align*}
Y' Y^{-1} - Y^{-1} (B / Y)_y & = Y^{-1} (Y' - Y') \\
& = 0,
\end{align*}
since $Y$ does not depend on $y$.  The coefficient of $\p_y$ can also
be simplified:
\begin{align*}
Y^{-1} \left( -(B/Y)_x + ((1 + A)/Y)_y \right) & = Y^{-1} \left(
  \frac{-y Y'' Y - y (Y')^2}{Y} + \frac{B Y'}{Y^2} + \frac{ 2y
    (Y')^2}{Y^2} \right) \\
& = Y^{-1} \left( -y Y'' + 2 y (Y')^2 / Y \right) \\
& = -y Y \left( \frac{ Y'}{Y^2} \right)_x.
\end{align*}

The volume element in these coordinates is 
\[
dV = Y dx dy.
\]
We want to conjugate our Laplacian by an isometry of metric spaces to
obtain an (essentially) self-adjoint operator with respect to $dx
dy$.  That is, let $T: L^2(dV) \to L^2(dxdy)$ be given by
\[
Tu(x,y) = Y^{1/2}(x) u(x,y).
\]
Then
\[
-\tDelta = -T \Delta T^{-1}
\]
is essentially self-adjoint on $L^2(dxdy)$.  We compute: 
\begin{align*}
\tDelta & = Y^{1/2} \Delta Y^{-1/2} \\
& =  \p_x^2 + Y^{-2} (1 + A) \p_y^2 -2B Y^{-2} \p_x \p_y - Y' Y^{-1}
\p_x \\
& \quad + \left( B Y' Y^{-3} - y Y (Y'/Y^2)_x \right) \p_y \\
& \quad -\frac{1}{2}
Y'' Y^{-1} + \frac{3}{4} (Y')^2 Y^{-2} .
\end{align*}
The terms $\p_x^2$, $Y^{-2} \p_y^2$, and the potential terms are already in divergence form,
since $Y$ does not depend on $y$.  We now consider the non-divergence
terms to make sure the whole operator is essentially self-adjoint in
simplest terms.  That is, we compute (recalling the forms of $A$ and
$B$) 
\begin{align*}
-& \frac{Y'}{Y} \p_x + \frac{A}{Y^2} \p_y^2 + \left( B Y' Y^{-3} - y Y
  (Y'/Y^2)_x \right) \p_y -2B Y^{-2} \p_x \p_y \\
& = -\frac{Y'}{Y} \p_x + \p_y \frac{A}{Y^2} \p_y - 2y \frac{(Y')^2}{Y^2}
\p_y - y \left( Y^2 (Y'/Y^3)_x \right) \p_y - \p_x \frac{B}{Y^2} \p_y
\\
& \quad +
\left( \frac{B}{Y} \right)_x \p_y - \p_y \frac{B}{Y^2} \p_x+
\left( \frac{B}{Y} \right)_y \p_x \\
& = -\frac{Y'}{Y} \p_x + \p_y \frac{A}{Y^2} \p_y - y \left(  (Y'/Y)_x
\right) \p_y - \p_x \frac{B}{Y^2} \p_y \\
& \quad + y \left(  (Y'/Y)_x
\right) \p_y - \p_y \frac{B}{Y^2} \p_x+
\left( \frac{Y'}{Y} \right) \p_x \\
& = \p_y \frac{A}{Y^2} \p_y - \p_x \frac{B}{Y^2} \p_y - \p_y
\frac{B}{Y^2} \p_x.
\end{align*}
All told then, we have
\[
\tDelta = \p_x^2 + \p_y Y^{-2} (1 + A) \p_y - \p_x \frac{B}{Y^2} \p_y - \p_y
\frac{B}{Y^2} \p_x -\frac{1}{2}
Y'' Y^{-1} + \frac{3}{4} (Y')^2 Y^{-2}.
\]

Let us write $\tOmega$ and $\tR$ for $\Omega$ and $R$ in
these new coordinates.  
We record that the rectangular part now is now $y = \pm 1$, with $-a \leq x
\leq a$, the function $Y(x) = \pi + r(x)$ is $C^{1,1}$ and piecewise $C^\infty$ with $r(x) \equiv
0$ for $x \in [-a,a]$, so that $r(x) = 
O(|\pm x-a|^2)$ as $\pm x \to a+$.  This means the function $A = (y
Y'(x))^2 = y^2 \O( | \pm x - a |^2)$ as $x$ approaches the interval
$[-a,a]$ from without, and the function $B = y Y'(x) Y(x) = y \O( | \pm
x-a|)$.

We return briefly to the $(x, y ) = (x', y' Y(x'))$ notation, where
$(x,y)$ are the coordinates in $\Omega$.  
Writing $v$ for our quasimode in these new coordinates, we observe in our
new coordinates we can write 
\[
u(x,y) = Y^{-1/2}(x) v(x, y Y^{-1}(x) ).
\]
We need to express $u_x$ and $u_{xx}$ in terms of the derivatives of
$v$ so that we may write the conditions
(\ref{E:small-1}-\ref{E:small-3}) in terms of derivatives of $v$.  We
first compute
\begin{align*}
u_x & = -\frac{1}{2} Y' Y^{-3/2} v(x, y Y^{-1} )  + Y^{-1/2} \left(
v_{x'}(x, y Y^{-1} ) + v_{y'}(x, y Y^{-1} ) y \left( -\frac{ Y'}{Y^2}
\right) \right) \\
& =  -\frac{1}{2} Y' Y^{-3/2} v(x, y Y^{-1} )  + Y^{-1/2} \left(
v_{x'}(x, y Y^{-1} ) - v_{y'}(x, y Y^{-1} ) y' \left( \frac{ Y'}{Y}
\right) \right) \\
& =  -\frac{1}{2} Y' Y^{-3/2} v(x', y' )  + Y^{-1/2} \left(
v_{x'}(x', y' ) - v_{y'}(x', y' ) y' \left( \frac{ Y'}{Y}
\right) \right) ,
\end{align*}
by the definitions of $x'$ and $y'$.  
Using this, we next compute
\begin{align*}
u_{xx} & = \left( -\frac{1}{2} Y'' Y^{-3/2} + \frac{3}{4} (Y')^2
  Y^{-5/2} \right) v(x, y Y^{-1} ) \\
& \quad - Y' Y^{-3/2} ( v_{x'}(x, y Y^{-1} ) - v_{y'}(x, y Y^{-1} ) y'
Y' Y^{-1} ) \\
& \quad + Y^{-1/2} \left( v_{x'x'} (x, y Y^{-1} ) - 2 v_{x'y'}(x, y
  Y^{-1} ) y' \frac{Y'}{Y} + v_{y'y'}(x, y Y^{-1} ) \left( y'
    \frac{Y'}{Y} \right)^2 \right) \\
& = \left( -\frac{1}{2} Y'' Y^{-3/2} + \frac{3}{4} (Y')^2
  Y^{-5/2} \right) v(x', y' ) \\
& \quad - Y' Y^{-3/2} ( v_{x'}(x', y'  ) - v_{y'}(x', y' ) y'
Y' Y^{-1} ) \\
& \quad + Y^{-1/2} \left( v_{x'x'} (x', y'  ) - 2 v_{x'y'}(x', y'
   ) y' \frac{Y'}{Y} + v_{y'y'}(x', y' ) \left( y'
    \frac{Y'}{Y} \right)^2 \right),
\end{align*}
again by the definitions of $x'$ and $y'$.  

The assumptions (\ref{E:small-1}-\ref{E:small-3}) now read
\begin{equation}
\label{E:small-1a}
\| v \|_{L^2( \tOmega \setminus \tR)} \leq \lambda^{-2-\delta_0},
\end{equation}
\begin{align}
\Bigg\| &  -\frac{1}{2} Y' Y^{-3/2} v(x', y' )  \notag \\
& \quad + Y^{-1/2} \left(
v_{x'}(x', y' ) - v_{y'}(x', y' ) y' \left( \frac{ Y'}{Y}
\right) \right)  \Bigg\|_{L^2( \tOmega \setminus \tR)} \notag \\
& \leq
\lambda^{-1-\delta_0},
\label{E:small-2a}
\end{align}
and
\begin{align}
\Bigg\| & \left( -\frac{1}{2} Y'' Y^{-3/2} + \frac{3}{4} (Y')^2
  Y^{-5/2} \right) v(x', y' ) \notag \\
& \quad - Y' Y^{-3/2} ( v_{x'}(x', y'  ) - v_{y'}(x', y' ) y'
Y' Y^{-1} ) \notag \\
& \quad + Y^{-1/2} \left( v_{x'x'} (x', y'  ) - 2 v_{x'y'}(x', y'
   ) y' \frac{Y'}{Y} + v_{y'y'}(x', y' ) \left( y'
    \frac{Y'}{Y} \right)^2 \right) \Bigg\|_{L^2( \tOmega \setminus
\tR)} \notag \\
& \leq \lambda^{-\delta_0}.
\label{E:small-3a}
\end{align}

In the sequel, we will be interested in several estimates on various
derivative quantitites in the wings, but localized in a $\lambda$
dependent neighbourhood of $\tR$.  Let $\psi \in \Ci( \reals_x)$ have
support in $\{ | x | \leq a + c \lambda^{-1-\epsilon_0/2} \}$.
Let $R$
be the ramp function
\[
R(x) = \begin{cases}
0, \text{ for } x \leq a, \\
x-a, \text{ for } x \geq 0,
\end{cases}
\]
and let $H = R'$ be the associated Heaviside function.  These
functions will localize in the right wing.  Of course the same
argument can be used to prove estimates in the left wing as well.

\begin{lemma}
\label{L:der-1}
We have the following estimates for our quasimode $v$:
\begin{align}
\| H \psi \p_{y'} v \| & = \O ( \max \{ \lambda^{-1-\delta_0} ,
  \lambda^{-1-\epsilon_0/2 } 
  \} ) 
\label{E:y-der} \\
\| H \psi \p_{x'} v \| & = \O ( \lambda^{-1-\delta_0}
) \label{E:x-der} \\
\| R \psi \p_{x'}\p_{y'} v \| & = \O ( \max \{ \lambda^{-\epsilon_0/2}
, \lambda^{-\delta_0}
\} )
\label{E:xy-der} \\
\| R^2 \psi \p_{y'}^2 v \| & = \O ( \max\{ 
 \lambda^{-2-\delta_0 - \epsilon_0} ,
\lambda^{-2-3\epsilon_0/2}
\} )
\label{E:yy-der} \\
\| H \psi \p_{x'}^2 v \| & = \O ( \max \{ 
\lambda^{-\delta_0}  ,  \lambda^{-\epsilon_0/2} 
\} )
 \label{E:xx-der} .
\end{align}
\end{lemma}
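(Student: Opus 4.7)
The plan is to derive all five estimates via integration-by-parts identities (Green's identities) applied to the quasimode equation $-\tDelta v = \lambda^2 v + O(\lambda^{-\epsilon_0}\|v\|)$, tested against multipliers of the form $(H\psi)^2\bar v$, $(R\psi)^2\bar v$, or $(R^2\psi)^2\bar v$, combined with the a priori smallness hypotheses \eqref{E:small-1a}--\eqref{E:small-3a}. Two structural observations drive the argument. First, on the support of $H\psi$ one has $|x'-a| \lesssim \lambda^{-1-\epsilon_0/2}$; since $Y \in C^{1,1}$ with $Y'(\pm a) = 0$, this forces $|Y'(x')| \lesssim \lambda^{-1-\epsilon_0/2}$ and $|Y''| = O(1)$ on this support, so the non-flat pieces $A = (y'Y')^2$ and $B = y'Y'Y$ of the metric are very small there. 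Second, the Dirichlet condition $v|_{y'=\pm 1}=0$ holds in a full neighborhood of the rectangular part covering the supports of all the cutoffs, so all $y'$-boundary terms in integration by parts vanish.

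For \eqref{E:y-der} I would test the quasimode equation against $(H\psi)^2\bar v$; integration by parts in $y'$ produces the positive quadratic form $\int H^2\psi^2 Y^{-2}(1+A)|\p_{y'} v|^2 \, dx'dy'$, while the right-hand side is bounded by $\lambda^2 \|H\psi v\|^2 \leq \lambda^2 \|v\|_{\tOmega\setminus\tR}^2 \leq \lambda^{-2-2\delta_0}$ via \eqref{E:small-1a}. A smooth approximation of $H$ at scale $\eta \sim \lambda^{-1-\epsilon_0/2}$ (or equivalently a trace bound at $x'=a$) then produces the additional $\lambda^{-1-\epsilon_0/2}$ contribution in the stated maximum. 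Next, \eqref{E:x-der} is obtained by simply rearranging \eqref{E:small-2a} to isolate $Y^{-1/2}v_{x'}$: the two remaining summands $-\tfrac12 Y' Y^{-3/2}v$ and $-Y^{-1/2}y'(Y'/Y)v_{y'}$ each carry a factor $|Y'| \lesssim \lambda^{-1-\epsilon_0/2}$, so by \eqref{E:small-1a} and the now-established \eqref{E:y-der} they are both $o(\lambda^{-1-\delta_0})$ on the support of $H\psi$.

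For the three second-order estimates, I would proceed analogously. Estimate \eqref{E:xx-der} follows from rearranging \eqref{E:small-3a} to isolate $Y^{-1/2}v_{x'x'}$, then using \eqref{E:x-der}, \eqref{E:y-der}, and the bound on $v$ in the wing to absorb the lower-order pieces; the $\lambda^{-\epsilon_0/2}$ summand there arises from $Y''$ being only $O(1)$ (not small) against $v$ of size $\lambda^{-2-\delta_0}$, combined with cutoff errors from $\psi'$ on a strip of width $\lambda^{-1-\epsilon_0/2}$. For \eqref{E:xy-der} I would differentiate the quasimode equation in $x'$ and apply a Green's identity against $(R\psi)^2 \p_{y'}\bar v$; the weight $R$ supplies the extra factor $\lambda^{-1-\epsilon_0/2}$ and eliminates any trace at $x'=a$. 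Finally, \eqref{E:yy-der} uses the quasimode equation directly to express $\p_{y'}^2 v = -Y^2(1+A)^{-1}\bigl(\p_{x'}^2 v + \lambda^2 v + \text{lower order}\bigr)$, multiplies by $R^2\psi$ (providing two factors of $\lambda^{-1-\epsilon_0/2}$), and applies \eqref{E:xx-der} and \eqref{E:small-1a} together with the smallness of the off-diagonal metric coefficients.

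The hard part will be the delta-function contribution at $x'=a$ in the integration-by-parts identity for \eqref{E:y-der}, arising from the discontinuity of $H$ and equivalent to a trace on the interior line $\{x'=a\}$. Resolving this cleanly---either via a trace inequality combined with the elliptic regularity of $v$, or by smoothing $H$ at scale $\eta \sim \lambda^{-1-\epsilon_0/2}$ and carefully tracking the resulting $\chi'$ errors---is precisely what forces the second $\lambda^{-1-\epsilon_0/2}$ term in the \eqref{E:y-der} bound. The weights $R$ and $R^2$ in the higher-order estimates serve specifically to sidestep this obstruction for those quantities, at the cost of the extra powers of $\lambda^{-1-\epsilon_0/2}$ visible in \eqref{E:xy-der} and \eqref{E:yy-der}. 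A secondary bookkeeping task is tracking the many small cross terms introduced by the non-flat pieces $A$ and $B$ of the conjugated metric, but each is negligible thanks to the smallness of $Y'$ on the support of the cutoffs.
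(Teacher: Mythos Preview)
Your general framework—energy identities via integration by parts, combined with the a priori bounds \eqref{E:small-1a}--\eqref{E:small-3a}—matches the paper's, but there is a genuine structural gap in how you close the estimates. You propose a linear chain: first \eqref{E:y-der}, then \eqref{E:x-der}, then \eqref{E:xx-der}, then the remaining two. This ordering does not close. When you test the equation against $(H\psi)^2\bar v$ to obtain \eqref{E:y-der}, the $\partial_{x'}^2$ piece of $\tDelta$ produces $\int (\partial_{x'}^2 v)(H\psi)^2\bar v$. You propose to handle this by integrating by parts in $x'$ and controlling the resulting boundary/trace contribution at $x'=a$ via smoothing $H$ or a trace inequality. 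But the trace at $x'=a$ sees $v$ from \emph{inside} the rectangle as well, where you have no smallness; and any one-sided trace bound using only wing data requires control of $\|H\psi\partial_{x'} v\|$ or $\|H\psi\partial_{x'}^2 v\|$, which you have not yet established. A parallel circularity appears in your derivation of \eqref{E:xx-der} from \eqref{E:small-3a}: that expression contains $v_{x'y'}$ and $v_{y'y'}$ terms (with coefficients of size $R$ and $R^2$), so you in fact need \eqref{E:xy-der} and \eqref{E:yy-der} \emph{before} you can isolate $\|H\psi\partial_{x'}^2 v\|$, contrary to your stated order.

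The paper resolves this not by smoothing $H$ or by trace bounds, but by \emph{never} integrating by parts in $x'$ against the weight $H\psi$ (which is discontinuous at $x'=a$). Instead it applies Cauchy--Schwarz directly, $\bigl|\int(\partial_{x'}^2 v)(H\psi)^2\bar v\bigr| \leq \|H\psi\partial_{x'}^2 v\|\,\|H\psi v\|$, and carries the unknown $X:=\|H\psi\partial_{x'}^2 v\|$ through all the intermediate computations. Each of \eqref{E:y-der}, \eqref{E:x-der}, \eqref{E:xy-der}, \eqref{E:yy-der} is first proved in the preliminary form ``$\leq$ (explicit power of $\lambda$) $+$ (coefficient decaying in $\lambda$)$\cdot X$''. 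Only at the very end is \eqref{E:small-3a} invoked, together with these preliminary bounds, to produce an inequality $X \leq C(\lambda^{-\delta_0} + \lambda^{-\epsilon_0/2}) + C\lambda^{-1-\epsilon_0/2} X$, which is solved for $X$. In particular, the $\lambda^{-1-\epsilon_0/2}$ in \eqref{E:y-der} does not arise from any trace term; it comes from substituting the final bound $X = \O(\lambda^{-\epsilon_0/2})$ back into the preliminary estimate $\|H\psi\partial_{y'} v\| \leq C(\lambda^{-1-\delta_0} + \lambda^{-1}X)$. Integration by parts in $x'$ is performed only when the weight is $R\psi$ or $R^2\psi$, which vanishes at $x'=a$ and creates no boundary contribution.
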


\begin{remark}
We will use repeatedly in the proof that $|R^k \psi | \leq C_k
\lambda^{-k(1 + \epsilon_0/2)} | \psi |$.  In this sense, the
numerology in the first bound in \eqref{E:yy-der} makes intuitive sense ($R^2$
contributes $\lambda^{-2-\epsilon_0}$, 
 $\p_y^2$ contributes
$\lambda^2$, and the estimate on $v$ alone in the wings {\it should}
contribute then $\lambda^{-2-\delta_0}$).

\end{remark}

\begin{proof}
Let us drop the cumbersome $(x',y')$ notation and write $(x,y)$
instead.  Our quasimode $v$ satisfies
\[
\begin{cases}
 -\tDelta v = \lambda^2 v + \tE, \\
v|_{\p \tOmega} = 0,
\end{cases}
\]
where $\| \tE \| = \O( \lambda^{-\epsilon_0 } ) \| v \|$.  Let us also
assume that $\| v \| = 1$ for simplicity.  Our strategy is to use
integrations by parts and the quasimode equation for $v$ to write
(\ref{E:y-der}-\ref{E:yy-der}) in terms of \eqref{E:xx-der}.  Each
estimate will have a power of $\lambda$ plus a term involving
\eqref{E:xx-der}, but with a small coefficient.  This will allow for
us to finally solve for \eqref{E:xx-der}.  Let us pause in passing to
note that, on the support of $\psi$, $\tOmega$ is rectangular so
integration by parts in $y$ alone is allowed.  We will further be able
to integrate by parts in $x$ for terms involving $R \psi$ or $R^2
\psi$, as this function is compactly supported, vanishing at $x = a$
and for $x \geq a + c \lambda^{-1-\epsilon_0/2}$.

We begin with \eqref{E:y-der}.  We write
\begin{align*}
\| H \psi \p_y v \|^2 & = \int \p_y v H \psi^2 \p_y \bar{v} dx dy \\
& = \int (-\p_v^2 v ) H \psi^2 \bar{v} dx dy \\
& = \int (-\tDelta v) H \psi^2 \bar{v} dx dy + \int (\p_x^2 - p_x B
Y^{-2} \p_y - p_y B
Y^{-2} \p_x )v H \psi^2 \bar{v} dxdy \\
& =: I_1 + I_2.
\end{align*}
Computing:
\begin{align*}
I_1 & = \int (\lambda^2 v + \tE ) H \psi^2 \bar{v} dx dy \\
& \leq C ( \lambda^2 \| H \psi v \|^2 + \| \tE \| \| H \psi v \| ) \\
& \leq C ( \lambda^{-2-2\delta_0} + \lambda^{-\epsilon_0 -2-\delta_0}
) \\
& \leq C \max \left\{ \lambda^{-2-2\delta_0},  \lambda^{-\epsilon_0
    -2-\delta_0} \right\}.
\end{align*}
Further,
\begin{align*}
I_2 & = \int (\p_x^2 - \p_x B
Y^{-2} \p_y - \p_y B
Y^{-2} \p_x )v H \psi^2 \bar{v} dxdy  \\
& \leq \| H \psi \p_x^2 v \| \| H \psi v \| \\
& \quad + \left| \int ( \p_x B Y^{-2} + B Y^{-2} \p_x )v H \psi^2
  \p_y \bar{v} dx dy \right| \\
& \quad + C\left| \int \p_x B_y Y^{-2} v H \psi^2 \bar{v} dx dy \right|
\\
& \leq C \Big(   \lambda^{-2-\delta_0} \| H \psi \p_x^2 v \|
+ (\| H \psi v \| + \| R \psi \p_x v \| )   \| H \psi \p_y v \| \\
& \quad +  (\| H \psi v \| + \| R \psi \p_x v \| )   \| H \psi  v \|
\Big) \\
& \leq C  \Big(    \lambda^{-2-\delta_0} \| H \psi \p_x^2 v \|
+ (\lambda^{-2-\delta_0}  + \lambda^{-1-\epsilon_0/2} \| H \psi \p_x v
\| ) (\| H \psi \p_y v \| + \lambda^{-2-\delta_0} )  \Big).
\end{align*}
Here we have used that $| B | \leq C | R |$ and that $| R \psi | \leq
C\lambda^{-1-\epsilon_0/2} | \psi |$.  
Applying Cauchy's inequality (with small parameter on the terms with
$\p_y v$), increasing $C$ as necessary, and
dropping all the terms which are smaller in $\lambda$, we have
\begin{align*}
I_2 & \leq C \lambda^{-2-\delta_0} ( \| H \psi \p_x^2 v \|  +  \| H
\psi \p_y v \| + \lambda^{-2-\delta_0} ) \\
& \quad +  C \lambda^{-1-\epsilon_0/2} \| H \psi \p_x v
\|  (\| H \psi \p_y v \| + \lambda^{-2-\delta_0} ) \\
& \leq C \Big(  \lambda^{-4-2\delta_0}
+ \lambda^{-1-\delta_0} \lambda^{-1}  \| H \psi \p_x^2 v \|
+  \lambda^{-2-\delta_0}  \| H
\psi \p_y v \| \\
& \quad +  \lambda^{-1-\epsilon_0/2} \| H \psi \p_x v
\|  \| H \psi \p_y v \|
+ \lambda^{-3-\delta_0 - \epsilon_0/2} \Big)\\
& \leq C \Big(
 \lambda^{-2-2\delta_0}
+ \lambda^{-2} \| H \psi \p_x^2 v \|^2 \\
& \quad + \lambda^{-2-\epsilon_0} \| H \psi \p_x v\|^2 \Big)
+ \frac{1}{2} \| H
\psi \p_y v \|^2.
\end{align*}

Collecting all terms from $I_1$ and $I_2$ and keeping only the largest
in $\lambda$, we have
\begin{align*}
\| H \psi \p_y v \|^2 & \leq C \Big( \lambda^{-2-2\delta_0} +
\lambda^{-2-\delta_0-\epsilon_0} 
+ \lambda^{-2} \| H \psi \p_x^2 v \|^2 \\
& \quad + \lambda^{-2-\epsilon_0} \| H \psi \p_x v\|^2 \Big)
+ \frac{1}{2} \| H
\psi \p_y v \|^2,
\end{align*}
or, rearranging (with larger $C>0$),
\begin{equation}
\label{E:y-deriv-1}
\| H \psi \p_y v \| \leq C \left(  \lambda^{-1-\delta_0} +
  \lambda^{-1-\epsilon_0/2 - \delta_0/2 } 
+ \lambda^{-1} \| H \psi \p_x^2 v \| 
 + \lambda^{-1-\epsilon_0/2} \| H \psi \p_x v\|   \right) .
\end{equation}

From \eqref{E:small-2a}, we have
\[
\left\| H \psi  \left(-\frac{1}{2} Y' Y^{-3/2} v  + Y^{-1/2} \left(
\p_x v - \p_y v \left( y\frac{ Y'}{Y}
\right) \right)  \right)  \right\| \leq \lambda^{-1-\delta_0},
\]
or, since $Y$ is bounded above and below and $| Y' | \leq C R$ on the
support of $\psi$, we
have using \eqref{E:y-deriv-1},
\begin{align*}
\| H \psi \p_x v \| & \leq C ( \lambda^{-1-\delta_0} + \| R \psi v \| +
\| R \psi \p_y v \| ) \\
& \leq C \left( \lambda^{-1-\delta_0} + \lambda^{-1-\epsilon_0/2} \| H
  \psi \p_y v \| \right) \\
& \leq C \left( \lambda^{-1-\delta_0} + \lambda^{-2-\epsilon_0/2} \| H
  \psi \p_x^2 v \| + \lambda^{-2-\epsilon_0} \| H
  \psi \p_x v \| \right).
\end{align*}
For $\lambda$ sufficiently large, we can solve for the $\p_x$ terms to
get
\begin{equation}
\label{E:x-deriv-1}
\| H \psi \p_x v \| \leq C ( \lambda^{-1-\delta_0} + \lambda^{-2-\epsilon_0/2} \| H
  \psi \p_x^2 v \| ).
\end{equation}
Plugging this back into \eqref{E:y-deriv-1}, we get
\begin{equation}
\label{E:y-deriv-2}
\| H \psi \p_y v \| \leq C \left(  \lambda^{-1-\delta_0} +
  \lambda^{-1-\epsilon_0/2 - \delta_0/2} 
+ \lambda^{-1} \| H \psi \p_x^2 v \| 
  \right) .
\end{equation}
This gives our preliminary estimates for (\ref{E:y-der}-\ref{E:x-der}).

We next use similar integrations by parts arguments to estimate the
second order mixed derivative.  Unfortunately, in this case, the
$x$-derivative can sometimes fall on the $\psi^2$ term, giving both
growth in $\lambda$, and lack of control by $\psi$.  That is, we only
have
\[
| \p_x \psi^2 | \leq C \lambda^{1+ \epsilon_0/2} | \psi |.
\]
Let us choose $\tpsi \in \Ci_c( \reals_x)$ with support in $\{ | x |
\leq a + 2c \lambda^{-1-\epsilon_0/2} \}$ such that $\tpsi \equiv 1$
on $\supp \psi$ and $| \p^k \tpsi | \leq C_k \lambda^{k (1 +
  \epsilon_0/2)}$ as well.  Then we can also say
\[
| \p_x \psi^2 | \leq C \lambda^{1+ \epsilon_0/2} | \psi \tpsi |,
\]
which will sometimes be useful.  However, in order to close our
estimates, we will have to take a loss on terms involving $\tpsi$ and
estimate them instead just using the trivial global quasimode bound,
rather than the improved bound from being localized in the wings.
That is, we will use
\[
\| H \tpsi \p_x^k v \| \leq C \| \p_x^k v \| \leq C \lambda^k
\]
where appropriate.

We compute:
\begin{align*}
\| &  R \psi \p_x \p_y v \|^2  = \int (\p_x \p_y v) R^2 \psi^2 \p_x \p_y
\bar{v} dx dy \\
& = \int \p_x (-\p_y^2v) R^2 \psi^2 \p_x \bar{v} dx dy \\
& = \int \p_x ( -\tDelta v) R^2 \psi^2 \p_x \bar{v} dx dy + \int
\p_x^3 v R^2 \psi^2 \p_x \bar{v} dx dy \\
& \quad - \int ( \p_x^2 B Y^{-2} \p_y  + \p_x \p_y B Y^{-2} \p_x ) v
R^2 \psi^2 \p_x \bar{v} dx dy \\
& =: I_1 + I_2 + I_3.
\end{align*}
We estimate
\begin{align*}
I_1 & = \int \p_x ( \lambda^2 v + \tE) R^2 \psi^2 \p_x \bar{v} dx dy
\\
& = \lambda^2 \| R \psi \p_x v \|^2 - \int \tE \p_x R^2 \psi^2 \p_x
\bar{v} dx dy \\
& \leq C \lambda^{-\epsilon_0} \| H \psi \p_x v \|^2 \\
& \quad - \int \tE ( 2 R' R \psi^2 \p_x + 2 \psi' \psi R^2 \p_x + R^2
\psi^2 \p_x^2 )\bar{v} dx dy \\
& \leq C \lambda^{-\epsilon_0} \| H \psi \p_x v \|^2 \\
& \quad + C \| \tE \| \left( \| R \psi  \p_x v \| + \| R^2 \psi' \psi
  \p_x v \| + \| R^2 \psi \p_x^2 v \| \right) \\
& \leq C \lambda^{-\epsilon_0} \| H \psi \p_x v \|^2 \\
& \quad + C \| \tE \| \left( \lambda^{-1-\epsilon_0/2}\| H \psi  \p_x
  v \|  + \lambda^{-2-\epsilon_0} \| H \psi \p_x^2  v \| \right) ,
\end{align*}
since $| R \psi | \leq \lambda^{-1-\epsilon_0/2} | \psi |$ as usual.
Using the global $\O ( \lambda^{-\epsilon_0})$ bound on $\tE$, we again apply Cauchy's inequality as necessary:
\begin{align*}
I_1 & \leq C \lambda^{-\epsilon_0} \| H \psi \p_x v \|^2 \\
& \quad + C \left( \lambda^{-1-\epsilon_0} ( \lambda^{-\epsilon_0/2}\| H \psi  \p_x
  v \| ) + \lambda^{-1-\epsilon_0} ( \lambda^{-1-\epsilon_0}\| H \psi \p_x^2 
  v \| ) \right) \\
& \leq C \lambda^{-\epsilon_0} \| H \psi \p_x v \|^2 \\
& \quad + C \left( \lambda^{-2-2\epsilon_0} +   \lambda^{-\epsilon_0}\| H \psi  \p_x
  v \|^2 +  \lambda^{-2-2\epsilon_0}\| H \psi
  \p_x^2 v \|^2  \right) \\
& \leq C \left( \lambda^{-2-2\epsilon_0} +   \lambda^{-\epsilon_0}\| H \psi  \p_x
  v \|^2 +  \lambda^{-2-2\epsilon_0}\| H \psi \p_x^2 
  v \|^2  \right).
\end{align*}
Plugging in \eqref{E:x-deriv-1} we get
\begin{align*}
I_1 & \leq C \left(  \lambda^{-2-2\epsilon_0} +
\lambda^{-2-2\delta_0 - \epsilon_0} + \lambda^{-2-2\epsilon_0}\| H \psi \p_x^2 
  v \|^2
      \right).
\end{align*}

For the integral $I_2$, we compute
\begin{align*}
I_2 & = - \int \p_x^2 v ( 2 R' R \psi^2 \p_x + 2 \psi' \psi R^2 \p_x +
R^2 \psi^2 \p_x^2 ) \bar{v} dx dy \\
& \leq C \Big( \| \psi H \p_x^2 v \| \| R \psi \p_x v \|  
+ \| R^2 \psi' \p_x^2 v \| \| H \psi \p_x v \|   
+ \| R \psi \p_x^2 v \|^2 \Big) \\
& \leq C \Big(   \lambda^{-1-\epsilon_0/2} \| \psi H \p_x^2 v \| \| H
\psi \p_x v \|  \\
& \quad + \lambda^{-1-\epsilon_0/2} \| H \tpsi \p_x^2 v \| \| H \psi
\p_x v \| 
+ \lambda^{-2-\epsilon_0} \| H \psi \p_x^2 v \|^2 \Big) \\
& \leq C \Big( \lambda^{1-\epsilon_0/2} \| \psi H \p_x v \| +
\lambda^{-2-\epsilon_0} \| H \psi \p_x^2 v \|^2 \Big) .
\end{align*}
Again using \eqref{E:x-deriv-1} and Cauchy's inequality as necessary,
we have
\begin{align*}
I_2 & \leq C \Big( \lambda^{1-\epsilon_0/2} (  \lambda^{-1-\delta_0} +
\lambda^{-2-\epsilon_0/2} \| H \psi \p_x^2 v \|)   +
\lambda^{-2-\epsilon_0} \| H \psi \p_x^2 v \|^2 \Big) \\
& \leq C \Big( \lambda^{-\delta_0 - \epsilon_0/2}
+\lambda^{-\epsilon_0/2}  (\lambda^{-1-
  \epsilon_0/2   } \| H \psi \p_x^2 v \|)  \\
& \quad +  \lambda^{-2-\epsilon_0} \| H \psi \p_x^2 v\|^2 \Big) \\
& \leq C \Big( \lambda^{-\delta_0 - \epsilon_0/2}   +
\lambda^{-\epsilon_0} + \lambda^{-2-\epsilon_0} \| H
\psi \p_x^2 v\|^2 \Big).
\end{align*}

We now estimate $I_3$:
\begin{align*}
I_3 & = -\int (\p_x^2 B Y^{-2} \p_y + \p_x \p_y B Y^{-2} \p_x )v  R^2
\psi^2 \p_x \bar{v} dx dy \\
& = \int ( \p_x B Y^{-2} \p_y v) (\p_x \psi^2 R^2 \p_x \bar{v} ) dx dy
+ \int (\p_x B Y^{-2} \p_x v ) (\psi^2 R^2 \p_x \p_y \bar{v} ) dx dy
\\
& =: J_1 + J_2.
\end{align*}
Continuing,
\begin{align*}
J_1 & = \int ( (B Y^{-2} )_x \p_y v + B Y^{-2} \p_x \p_y v ) ( 2 R' R \psi^2 \p_x + 2 \psi' \psi R^2 \p_x +
R^2 \psi^2 \p_x^2 ) \bar{v} dx dy \\
& \leq C \Big(  \| R \psi \p_y v \| \| H \psi \p_x v \|
+ \| H \tpsi \p_y v \|   \| R^2 \psi' \psi \p_x v \|
+ \| H \psi \p_y v \| \| R^2 \psi \p_x^2 v \| \\
& \quad + \| R \psi \p_x \p_y v \| \| R \psi \p_x v \|  + \| R \psi
\p_x \p_y v \| \| \tpsi R \p_x v \|  + \| R \psi \p_x \p_y v \| \|
R^2 \psi \p_x^2 v \| \Big).
\end{align*}
Here in the last three terms we have used that $| B | \leq C R$.   
Continuing as before by using (\ref{E:x-deriv-1}-\ref{E:y-deriv-2}) and
Cauchy's inequality with small parameter, we have
\begin{align*}
J_1 & \leq C \Big( \lambda^{-1-\epsilon_0/2} \| H \psi \p_y v \| \| H
\psi \p_x v \|
+ \lambda \lambda^{-1-\epsilon_0/2} \| H \psi \p_x v \|
\\
& \quad + \lambda^{-2-\epsilon_0} \| H \psi \p_y v \| \| H \psi \p_x^2 v \|  + \lambda^{-1-\epsilon_0/2} \| R \psi \p_x \p_y v \| \| H \psi
\p_x v \|    \\
& \quad +  \lambda \lambda^{-1-\epsilon_0/2} \| R \psi
\p_x \p_y v \| 
+ \lambda^{-2-\epsilon_0} \| R \psi \p_x \p_y v \| \|
H \psi \p_x^2 v \| \Big) \\
& \leq C \Bigg( \lambda^{-1-\epsilon_0/2} \left(  \lambda^{-1-\delta_0} +
  \lambda^{-1-\epsilon_0/2 - \delta_0/2} 
+ \lambda^{-1} \| H \psi \p_x^2 v \| 
  \right) \\
& \quad \quad \cdot (\lambda^{-1-\delta_0} +
\lambda^{-2-\epsilon_0/2} \| H \psi \p_x^2 v \| ) \\
& \quad + \lambda^{-\epsilon_0/2}   (\lambda^{-1-\delta_0} +
\lambda^{-2-\epsilon_0/2} \| H \psi \p_x^2 v \| ) \\
& \quad + \lambda^{-2-\epsilon_0}  \left(  \lambda^{-1-\delta_0} +
  \lambda^{-1-\epsilon_0/2 - \delta_0/2} 
+ \lambda^{-1} \| H \psi \p_x^2 v \| 
  \right) \| H \psi \p_x^2 v \| \\
& \quad 
+ \Big( \lambda^{-1-\epsilon_0/2}  \| H \psi
\p_x v \|    +  \lambda^{-\epsilon_0/2} 
+ \lambda^{-2-\epsilon_0}  \|
H \psi \p_x^2 v \| \Big)^2
\Bigg) \\
& \quad 
+ \frac{1}{4} \| R \psi \p_x \p_y v \|^2 \\
& \leq C 
\Bigg(
\lambda^{-1-\delta_0 - \epsilon_0/2 }
+  \lambda^{-2-\epsilon_0} \| H \psi \p_x^2 v \| 
+ \lambda^{-3-\epsilon_0} \| H \psi \p_x^2 v \|^2 \\
& \quad +
\Big( \lambda^{-1-\epsilon_0/2} (\lambda^{-1-\delta_0} +
\lambda^{-2-\epsilon_0/2} \| H \psi \p_x^2 v \| )
+   \lambda^{-\epsilon_0/2} 
+ \lambda^{-2-\epsilon_0}  \|
H \psi \p_x^2 v \| \Big)^2
\Bigg) \\
& \quad 
+ \frac{1}{4} \| R \psi \p_x \p_y v \|^2 \\
& \leq C \Big(  
\lambda^{-\epsilon_0}
+ \lambda^{-3-\epsilon_0} \| H \psi \p_x^2 v \|^2
\Big)
+ \frac{1}{4} \| R \psi \p_x \p_y v \|^2.
\end{align*}

We still have to estimate $J_2$ (here again we use Cauchy's inequality
with small parameter):
\begin{align*}
J_2 & = \int (\p_x B Y^{-2} \p_x v ) (\psi^2 R^2 \p_x \p_y \bar{v} )
dx dy \\
& = \int \left( (BY^{-2})_x \p_x v + B Y^{-2} \p_x^2 v \right) \psi^2
R^2 \p_x \p_y \bar{v} dx dy \\
& \leq C \left( \| R \psi \p_x v \| + \| R^2 \psi \p_x^2 v \| \right)
\| R \psi \p_x \p_y v \| \\
& \leq C  \left( \| R \psi \p_x v \|^2 + \| R^2 \psi \p_x^2 v \|^2
\right) + \frac{1}{4} \| R \psi \p_x \p_y v \|^2 \\
& \leq C \left(    \lambda^{-4-\epsilon_0 - 2\delta_0} +
\lambda^{-4-2\epsilon_0} \| H \psi \p_x^2 v \|^2 \right) 
+ \frac{1}{4} \| R \psi \p_x \p_y v \|^2.
\end{align*}
Collecting the largest terms in $\lambda$ from $J_1$ and $J_2$, we
have
\[
I_3 \leq C ( \lambda^{-\epsilon_0} + \lambda^{-3-\epsilon_0} \| H \psi
\p_x^2 v \|^2 ) + \frac{1}{2} \| R \psi \p_x \p_y v \|^2.
\]
Finally summing $I_1 + I_2 + I_3$ and keeping only the largest terms
in $\lambda$, we get
\begin{align*}
\| R \psi \p_x \p_y v \|^2
& \leq 
C ( \lambda^{-\epsilon_0} + \lambda^{-\delta_0 - \epsilon_0/2} + 
\lambda^{-2-\epsilon_0}  \| H \psi \p_x^2 v \|^2 ) + \frac{1}{2} \| R
\psi \p_x \p_y v \|^2,
\end{align*}
which, after rearranging, implies
\begin{equation}
\label{E:xy-deriv-1}
\| R \psi \p_x \p_y v \| \leq C ( \lambda^{-\epsilon_0/2}  +
\lambda^{-\delta_0/2 - \epsilon_0/4} +
\lambda^{-1-\epsilon_0/2}  \| H \psi \p_x^2 v \| ) .
\end{equation}

Now we can use the triangle inequality, together with the estimates
already proved, to write the \eqref{E:yy-der} in terms of
\eqref{E:xx-der}:
\begin{align}
\| R^2 \psi \p_y^2 v \| & \leq \| R^2 \psi \tDelta v \| + \| R^2 \psi
\p_x^2 v \| + \|R^2 \psi \p_x B Y^{-2} \p_y v \| + \|R^2 \psi \p_y B
Y^{-2} \p_x v \| \notag \\
& \leq  \lambda^2 \| R^2 \psi  v \| + \| R^2 \psi \tE \| + \| R^2 \psi
\p_x^2 v \| \notag  \\
& \quad + \|R^2 \psi \p_x B Y^{-2} \p_y v \| + \|R^2 \psi \p_y B
Y^{-2} \p_x v \|  \notag \\
& \leq C \Big(  
\lambda^{-\epsilon_0} \| H \psi v \|
+ \lambda^{-2-\epsilon_0} \| \tE \|
+ \lambda^{-2-\epsilon_0} \| H \psi \p_x^2 v \|  \notag \\
& \quad 
+ \| R^2 \psi (B Y^{-2})_x \p_y v \|
+ \| R^2 \psi B Y^{-2} \p_x \p_y v \|
+ \| R^2 \psi (B Y^{-2})_y \p_x v \|
\Big)  \notag \\
& \leq C \Big( 
\lambda^{-\epsilon_0} \| H \psi v \|
+ \lambda^{-2-\epsilon_0} \| \tE \|
+ \lambda^{-2-\epsilon_0} \| H \psi \p_x^2 v \|  \notag \\
& \quad 
+ \lambda^{-2-\epsilon_0} \|  H\psi \p_y v \|
+ \lambda^{-2-\epsilon_0} \|  R\psi  \p_x \p_y v \|
+ \lambda^{-3-3\epsilon_0/2} \| H \psi  \p_x v \|
\Big)  \notag \\
& \leq C \Big(
\lambda^{-2-\delta_0 -\epsilon_0} + \lambda^{-2-2\epsilon_0} 
+ \lambda^{-2-\epsilon_0} \| H \psi \p_x^2 v \|  \notag \\
& \quad 
+ \lambda^{-2-\epsilon_0} \left(  \lambda^{-1-\delta_0} +
  \lambda^{-1-\epsilon_0/2 - \delta_0/2} 
+ \lambda^{-1} \| H \psi \p_x^2 v \| 
  \right) \\
& \quad 
+ \lambda^{-2-\epsilon_0}  ( \lambda^{-\epsilon_0/2}  +
\lambda^{-\delta_0/2 - \epsilon_0/4} +
\lambda^{-1-\epsilon_0/2}  \| H \psi \p_x^2 v \| )
\Big)  \notag \\
& \leq C ( \lambda^{-2-\delta_0 - \epsilon_0}  +
\lambda^{-2-3\epsilon_0/2} + \lambda^{-2-\delta_0/2 - 5 \epsilon_0/4 }
+ \lambda^{-2-\epsilon_0} \| H \psi \p_x^2 v \|) \label{E:yy-deriv-1}
\end{align}

We now want to close the loop of our argument by using the a priori
assumed bounds in $u_x$ and $u_{xx}$ together with
(\ref{E:x-deriv-1}-\ref{E:yy-deriv-1}).    That is, from
(\ref{E:small-2a}-\ref{E:small-3a}), using once again that $| Y' |
\leq C R$, we have
\begin{equation}
\label{E:small-2b}
\| H \psi \p_x v \| \leq C (\lambda^{-1-\delta_0} + \| R \psi \p_y v
\| + \| R \psi v \| )
\end{equation}
and
\begin{align}
\| H \psi \p_x^2 v \| & \leq C ( \lambda^{-\delta_0}
+ \| R \psi \p_x \p_y v \|
+ \| R^2 \psi \p_y^2 v \| \notag \\
& \quad 
+ \| R \psi \p_x v \|
+ \| R^2 \psi \p_y v \|
+ \| H \psi v \| ).
\label{E:small-3b}
\end{align}
We first use similar estimates to pull out the appropriate powers of
$R$ in \eqref{E:small-3b} and then plug in (\ref{E:x-deriv-1}-\ref{E:yy-deriv-1}) into \eqref{E:small-3b}:
\begin{align*}
\| H \psi \p_x^2 v \| & \leq C \Big( \lambda^{-\delta_0}
+ \| R \psi \p_x \p_y v \|
+ \| R^2 \psi \p_y^2 v \|  \\
& \quad 
+ \lambda^{-1-\epsilon_0/2}\| H \psi \p_x v \|
+ \lambda^{-2-\epsilon_0} \| H \psi \p_y v \|
+ \lambda^{-2-\delta_0} \Big) \\
& \leq C \Big(
\lambda^{-\delta_0}
+ ( \lambda^{-\epsilon_0/2}  +
\lambda^{-\delta_0/2 - \epsilon_0/4} +
\lambda^{-1-\epsilon_0/2}  \| H \psi \p_x^2 v \| )
\\
& \quad 
+ ( \lambda^{-2-\delta_0 - \epsilon_0}  +
\lambda^{-2-3\epsilon_0/2} + \lambda^{-2-\delta_0/2 - 5 \epsilon_0/4 }
+ \lambda^{-2-\epsilon_0} \| H \psi \p_x^2 v \|)
 \\
& \quad 
+       \lambda^{-1-\epsilon_0/2} ( \lambda^{-1-\delta_0} + \lambda^{-2-\epsilon_0/2} \| H
  \psi \p_x^2 v \| ) \\
& \quad 
+ \lambda^{-2-\epsilon_0} (  \lambda^{-1-\delta_0} + \lambda^{-1-\epsilon_0/2 - \delta_0/2} +
+ \lambda^{-1} \| H \psi \p_x^2 v \| 
  ) \\
& \quad + \lambda^{-2-\delta_0} \Big).
\end{align*}
As usual, keeping only the largest terms in $\lambda$, we have
\[
\| H \psi \p_x^2 v \| \leq C ( \lambda^{-\delta_0}   + \lambda^{-\epsilon_0/2}  +
\lambda^{-\delta_0/2 - \epsilon_0/4} +
+ \lambda^{-1-\epsilon_0/2}  \| H \psi \p_x^2 v \|) .
\]
For $\lambda$ sufficiently large, this implies
\begin{equation}
\label{E:small-3c}
\| H \psi \p_x^2 v \| \leq C ( \lambda^{-\delta_0}   + \lambda^{-\epsilon_0/2}  +
\lambda^{-\delta_0/2 - \epsilon_0/4} +
+ \lambda^{-1-\epsilon_0/2}  \| H \psi \p_x^2 v \|) .
\end{equation}
This is \eqref{E:xx-der}, once we use Cauchy's inequality yet another
time to get
\[
\lambda^{-\delta_0/2 - \epsilon_0/4} \leq \frac{1}{2} (
\lambda^{-\delta_0} + \lambda^{-\epsilon_0/2} ).
\]

The very last step to close the loop is to plug \eqref{E:small-3c}
into (\ref{E:x-deriv-1}-\ref{E:yy-deriv-1}) to recover
\eqref{E:y-der}, \eqref{E:xy-der}, and \eqref{E:yy-der}, and then plug
the necessary estimates into \eqref{E:small-2b} to recover
\eqref{E:x-der}.  This completes the proof.

\end{proof}


We now continue with the proof of Theorem \ref{T:stad-non-conc}.  
Let $\chi \in \Ci_c$ be a smooth function such that $\chi(x) \equiv 1$
on $\{ | x | \leq a 
\}$ with support in
$\{ | x | \leq a + 
\lambda^{-1-\epsilon_0/2} \}.$  In particular,
this means $| \p^m \chi | \leq C_m \lambda^{m(1 + \epsilon_0/2)}$.
Then 
\begin{align*}
-\tDelta \chi v & = - \chi \tDelta v - [\tDelta, \chi] v \\
& = \lambda^2 \chi v  + \O( 
\lambda^{-\epsilon_0} ) \| v \|  - [\tDelta, \chi] v .
\end{align*}
We need to examine the commutator.  We have (in our previous notation) 
\begin{align*}
\| [\p_x^2, \chi] v \| & \leq \| \chi'' v \| + 2 \| \chi' \p_x v \| \\
& \leq C \lambda^{2+\epsilon_0} \|H\psi v \|
+ C \lambda^{1 + \epsilon_0/2}  \| H \psi \p_x v \| \\
& \leq C ( \lambda^{2 + \epsilon_0 -2 - \delta_0} + \lambda^{1 +
  \epsilon_0/2  -1-\delta_0 } ) \| v \| \\
& \leq C \lambda^{-\epsilon_0} \| v \|
\end{align*}
if $\epsilon_0>0$ is sufficiently small that $\epsilon_0/2 - \delta_0
\leq - \epsilon_0$.  Here we have
used \eqref{E:small-1a} and \eqref{E:x-der} in the third line.  
We of course have $[\p_y^2, \chi] = 0$.  Similarly, for the mixed terms in $\tDelta$,
we have
\begin{align*}
\left\| \left[\p_x \frac{B}{Y^2} \p_y , \chi \right] v \right\| &  =
\left\| \frac{B}{Y^2} \chi' \p_y v \right\| \\
& \leq C \| \chi' R \psi \p_y v \| \\
& \leq C \| H \psi 
\p_y v \|\\
& \leq C \max \{ \lambda^{ -1-\delta_0} , \lambda^{ -1 -
  \epsilon_0/2}
\} \| v \| \\
& \leq C \lambda^{-\epsilon_0} \| v \|
\end{align*}
if $\epsilon_0>0$ is sufficiently small (in this case we just need
$\epsilon_0 \leq 1$).  Here we have used
\eqref{E:y-der} in the third line.  
The other mixed term is similarly handled.   
Of course the potential terms also commute with $\chi$.  
This means that $\chi v$ is still an equally good quasimode as $v$.

Now observe that $A = \O( | \pm x-a|^2)$, $B = \O(|\pm x-a|)$ and the
potential terms are bounded, and all have support in $\{ | x | \geq a \}$.  Hence
\begin{align*}
\left\| \p_x \frac{B}{Y^2} \p_y \chi v \right\| & \leq \left\| \left(
    \frac{B}{Y^2} \right)_x \chi \p_y v \right\| + \left\| 
    \frac{B}{Y^2}  \chi' \p_y v \right\| + \left\| 
    \frac{B}{Y^2}  \chi  \p_x \p_y v \right\| \\
& \leq C \left( \| H \psi \p_y v \| + \| R \psi \p_x \p_y v \| \right) \\
& \leq C \Big( 
\max \{ \lambda^{ -1-\delta_0} , \lambda^{ -1 -\epsilon_0/2} \}  
+\max \{ \lambda^{-\epsilon_0/2}, \lambda^{-\delta_0}       \} \Big) \| v \| \\
& \leq C \lambda^{-\epsilon_0/2} \| v \|
\end{align*}
if $\epsilon_0>0$ is sufficiently small (our previous bound of
$\epsilon_0 \leq 2 \delta_0/3$ suffices here).  Here we have used
\eqref{E:y-der} and \eqref{E:xy-der} in the third line.  
Similarly,
\begin{align*}
\| & \p_y A Y^{-2} \p_y \chi v \| \\
& \leq  \| A_y Y^{-2} \p_y \chi v \|
+\| A Y^{-2} \p_y^2 \chi v \| \\
& \leq C \left( \| R^2 \psi \p_y v \| + \| R^2 \psi \p_y^2 v \|
\right) \\
& \leq C \left( \lambda^{-2-\epsilon_0} \max \{ \lambda^{ -1-\delta_0} , \lambda^{ -1 -
  \epsilon_0/2}
\}  
+ \max \{ \lambda^{ -2-\delta_0 - \epsilon_0} , \lambda^{ -2 -3
  \epsilon_0/2}
\}  \right)
\| v \| \\
& \leq C \lambda^{-\epsilon_0} \| v \|
\end{align*}
if the parameters are again chosen small.  Here we have again used
\eqref{E:y-der} as well as \eqref{E:yy-der}.  
The potential terms satisfy
\[
\left\| (-\frac{1}{2}
Y'' Y^{-1} + \frac{3}{4} (Y')^2 Y^{-2}) \chi v \right\| = \O(
\lambda^{-2-\delta_0} ) \| v \|.
\]
Rearranging and plugging these estimates in to $\tDelta$, we get
\begin{equation}
\label{E:chiv-qm}
-(\p_x^2 + Y^{-2}  \p_y^2 -\lambda^2 ) \chi v =
\O( \lambda^{-\epsilon_0/2}) \| \chi v \|,
\end{equation}
since $\| v \| = \| \chi v \| - \O(\lambda^{-2-\delta_0}) \| v \|$.  
We recall for concreteness that we have shrunk $\epsilon_0>0$ as
necessary and the worst estimate comes from 
the terms with $\p_x \p_y$ in $\tDelta$.
  Let us denote $P = -(\p_x^2 + Y^{-2}  \p_y^2 )$.

Now, on the support of $\chi$, the function $Y(x) = \pi + \O( | \pm x -
a|^2) = \pi + \O( \lambda^{-2-\epsilon_0} )$.  Choose a function $\tY$
in the $0$-Gevrey class $\tY \in \mathcal{G}^0_\tau$ for $\tau <
\infty$ (see \cite{Chr-inf-deg}) satisfying $\tY (x) \equiv \pi$ for $x \in [-a,a]$ and
$\tY'(x) < 0$ for $x < -a$, say.  This means that the corresponding
partially rectangular region for $\tY$ opens ``out'' on the left.  Then on the support of $\chi$, we
have $Y^{-2} - \tY^{-2} = \O( \lambda^{-2 - \epsilon_0} )$.  
Quasimodes for the 
operator $\tP = -\p_x^2 - \tY^{-2} \p_y^2$ are studied in detail in
\cite[Theorem 3]{Chr-inf-deg} (recalled in the appendix below), where
it is shown that for a function $\chi v$ with these support properties 
satisfying 
\[
(\tP-\lambda^2) \chi v = \O( \lambda^{-\epsilon_0'}) \|
\chi v \|, 
\]
for any $\epsilon_0'>0$, 
necessarily $\chi v =
\O(\lambda^{-\infty})$.  
Of course in the case at hand, we have
\begin{align*}
(\tP - \lambda^2) \chi v & = (P-\lambda^2) \chi v + ( \tP - P ) \chi v
\\
& = \| (Y^{-2} - \tY^{-2} ) \p_y^2 \chi v \| \\
& \leq C \| R^2 \p_y^2 \psi v \| \\
& = \O( \lambda^{ -2-3\epsilon_0/2} )\|  v \|,
\end{align*}
by our choice of $\epsilon_0>0$.  
This shows  our quasimodes are quasimodes for $\tP$ as well.  
As $\| \chi v \| \geq \| v \| - C
\lambda^{-2-\delta_0} \| v \|$, this is a contradiction.

\end{proof}

\section{Proof of Theorem \ref{C:stad-non-conc-2}}

In this section, we will first prove an analogue of Lemma
\ref{L:der-1} in the case the boundary is $C^{k + \alpha}$ with
$\alpha + k >2$.  The main differences are that the bounds in the
wings will now have exponents {\it smaller} than $2$, and the powers
of the ramp function $R$ will be larger.  The proof has enough subtle
differences that we reproduce it here in this case.


Let $\psi \in \Ci( \reals_x)$ have
support in $\{ | x | \leq a + c \lambda^{-p} \}$, for some $0 < p \leq
1$.  
Let $R$ and $H$ 
be the ramp and Heaviside functions as above.  
For
this Lemma, we assume an analogue of
(\ref{E:small-1a}-\ref{E:small-3a}).  Let $0 \leq s \leq 2$, and
assume 
\begin{equation}
\label{E:small-1a'}
\| v \|_{L^2( \tOmega \setminus \tR)} \leq \lambda^{-s},
\end{equation}
\begin{align}
\Bigg\| &  -\frac{1}{2} Y' Y^{-3/2} v(x', y' )  \notag \\
& \quad + Y^{-1/2} \left(
v_{x'}(x', y' ) - v_{y'}(x', y' ) y' \left( \frac{ Y'}{Y}
\right) \right)  \Bigg\|_{L^2( \tOmega \setminus \tR)} \notag \\
& \leq
\lambda^{1-s},
\label{E:small-2a'}
\end{align}
and
\begin{align}
\Bigg\| & \left( -\frac{1}{2} Y'' Y^{-3/2} + \frac{3}{4} (Y')^2
  Y^{-5/2} \right) v(x', y' ) \notag \\
& \quad - Y' Y^{-3/2} ( v_{x'}(x', y'  ) - v_{y'}(x', y' ) y'
Y' Y^{-1} ) \notag \\
& \quad + Y^{-1/2} \left( v_{x'x'} (x', y'  ) - 2 v_{x'y'}(x', y'
   ) y' \frac{Y'}{Y} + v_{y'y'}(x', y' ) \left( y'
    \frac{Y'}{Y} \right)^2 \right) \Bigg\|_{L^2( \tOmega \setminus
\tR)} \notag \\
& \leq \lambda^{2-s}.
\label{E:small-3a'}
\end{align}

\begin{lemma}
\label{L:der-2}
Assume the boundary $\p \Omega$ is $C^{k, \alpha}$ with $k + \alpha
> 2$ and $k \geq 1$, and set $\gamma = k + \alpha -1$.  
We have the following estimates for our quasimode $v$, assuming the
bounds (\ref{E:small-1a'}-\ref{E:small-3a'}):
\begin{align}
\| H \psi \p_{y'} v \| & = \O ( \max \{ \lambda^{1-s} ,
  \lambda^{-p(2 \gamma -1)}
  \} ) 
\label{E:y-der'} \\
\| H \psi \p_{x'} v \| & = 
\O ( \max \{ \lambda^{1-s} ,
  \lambda^{-p(3 \gamma -1)}
  \} ) 
\label{E:x-der'} \\
\| R^\gamma \psi \p_{x'}\p_{y'} v \| & = \O ( \max \{ \lambda^{2-s-p\gamma}  ,
\lambda^{1-p(2 \gamma-1)},
\lambda^{(2-s-p(3 \gamma-2))/2} \} )
\label{E:xy-der'} \\
\| R^{\gamma +1} \psi \p_{y'}^2 v \| & = \O ( \max\{ 
 \lambda^{2-s-p(\gamma+1)} ,
\lambda^{1-3 p \gamma}
\} )
\label{E:yy-der'} \\
\| H \psi \p_{x'}^2 v \| & = \O ( \max \{ 
\lambda^{2-s}  ,  
\lambda^{1-p(2 \gamma-1)} \} )
 \label{E:xx-der'} .
\end{align}
\end{lemma}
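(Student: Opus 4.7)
The plan is to follow the bootstrap structure of the proof of Lemma \ref{L:der-1} very closely, with all quantitative exponents adjusted to account for the improved regularity at $x = \pm a$. Specifically, since $r(x) = O(|x \mp a|^{\gamma+1})$ near the edges of the flat part, we now have the improved pointwise bounds $|Y'(x)| \leq C R^\gamma$, $|Y''(x)| \leq C R^{\gamma-1}$, and therefore $|B| \leq C R^\gamma$, $|A| \leq C R^{2\gamma}$, while the potential terms $-\tfrac{1}{2} Y''Y^{-1} + \tfrac{3}{4}(Y')^2 Y^{-2}$ are bounded by $C R^{\gamma-1}$. Combining this with the single uniform localization bound $|R^k \psi| \leq C_k \lambda^{-pk}|\psi|$, each appearance of a $B$-type or $A$-type coefficient now pays $\lambda^{-p\gamma}$ or $\lambda^{-2p\gamma}$ instead of the $\lambda^{-p}$ or $\lambda^{-2p}$ available in the $C^{1,1}$ case.

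First I would bound $\|H\psi \p_y v\|^2$ exactly as in the proof of \eqref{E:y-der}: rewrite it as $\int(-\p_y^2 v) H\psi^2 \bar v$, substitute $-\p_y^2 v$ using the quasimode equation and the expression for $\tilde\Delta$, and integrate the cross terms by parts in $y$ (which is allowed since $\tilde\Omega$ is rectangular on $\supp\psi$). The quasimode contribution gives $\lambda^{2(1-s)}$; the $\p_x^2$ contribution gives $\lambda^{2-s}\|H\psi\p_x^2 v\|$ absorbed via Cauchy; the $B$-type cross terms contribute $\lambda^{-p\gamma}\|H\psi\p_x v\|\|H\psi\p_y v\|$, absorbed into $\tfrac12\|H\psi\p_y v\|^2$ and $\lambda^{-2p\gamma}\|H\psi\p_x v\|^2$. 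Combined with \eqref{E:small-2a'} (rewritten using $|Y'| \leq CR^\gamma$) to eliminate $\|H\psi\p_x v\|$, this yields the preliminary forms of \eqref{E:y-der'} and \eqref{E:x-der'} in terms of $\|H\psi\p_x^2 v\|$.

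Next I would estimate the mixed derivative $\|R^\gamma \psi \p_x\p_y v\|^2 = \int \p_x(-\p_y^2 v) R^{2\gamma}\psi^2 \p_x \bar v$, again using the quasimode equation and integrating by parts. Each time $\p_x$ falls on $\psi^2$ one pays $\lambda^p$, so I would introduce an auxiliary cutoff $\tilde\psi$ with $|\p^k\tilde\psi| \leq C\lambda^{kp}$ and $\tilde\psi \equiv 1$ on $\supp\psi$, using the trivial global bound $\|H\tilde\psi\p_x^k v\|\leq \lambda^k$ on those terms. The three-way Cauchy--Schwarz step in the $B$-cross term is exactly what produces the exponent $\lambda^{(2-s-p(3\gamma-2))/2}$ in \eqref{E:xy-der'}. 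The second-order estimate $\|R^{\gamma+1}\psi\p_y^2 v\|$ then comes directly from the quasimode equation and the triangle inequality, bounding each term using \eqref{E:small-1a'}, \eqref{E:y-der'}, \eqref{E:xy-der'}, and the $R^{\gamma+1}$-weighted versions of the other contributions.

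The loop closes as before: plugging the chain of estimates into \eqref{E:small-3a'} (rewritten using $|Y'| \leq CR^\gamma$ and $|Y''| \leq CR^{\gamma-1}$), each occurrence of $\|H\psi\p_x^2 v\|$ on the right-hand side appears with a coefficient $\lambda^{-p\gamma}$ or smaller, so for $\lambda$ large this self-referential term can be absorbed on the left and one recovers \eqref{E:xx-der'}; propagating this back through the previous steps yields \eqref{E:y-der'}--\eqref{E:yy-der'}. The main obstacle is purely book-keeping: because $\gamma$, $p$, and $s$ are now all independent parameters, the competition among the various error exponents ($\lambda^{2-s-p\gamma}$, $\lambda^{1-p(2\gamma-1)}$, $\lambda^{1-p(3\gamma-1)}$, and the Cauchy-produced half-exponent $\lambda^{(2-s-p(3\gamma-2))/2}$) must be tracked carefully at each integration-by-parts step, keeping only the dominant contributions after each Cauchy absorption while ensuring every self-referential coefficient remains $o(1)$ in $\lambda$.
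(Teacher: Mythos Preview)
Your proposal is correct and follows essentially the same route as the paper: the same bootstrap structure as Lemma~\ref{L:der-1}, the same use of the improved coefficient bounds $|B|\leq CR^{\gamma}$, $|A|\leq CR^{2\gamma}$ and the localization estimate $|R^{q}\psi|\leq C\lambda^{-pq}|\psi|$, the auxiliary cutoff $\tilde\psi$ for the mixed-derivative integration by parts, and the loop-closing via \eqref{E:small-3a'} with self-referential $\|H\psi\p_x^2 v\|$ terms carrying a coefficient $\lambda^{-p\gamma}$. One small slip: in the first step the $\p_x^2$ contribution to $\|H\psi\p_y v\|^2$ is $\|H\psi\p_x^2 v\|\,\|H\psi v\|\leq \lambda^{-s}\|H\psi\p_x^2 v\|$, not $\lambda^{2-s}\|H\psi\p_x^2 v\|$, but after Cauchy this produces the same $\lambda^{-2}\|H\psi\p_x^2 v\|^2 + \lambda^{2-2s}$ contribution and the rest of the argument is unaffected.
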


\begin{remark}
We will use repeatedly in the proof that $|R^q \psi | \leq C_q
\lambda^{-qp} | H \psi |$ for any $q \geq 0$.  In this sense, the
numerology in \eqref{E:yy-der'} makes intuitive sense ($R^{\gamma+1}$
contributes $\lambda^{-p(\gamma+1)}$, 
 $\p_y^2$ contributes
$\lambda^2$, and the estimate on $v$ alone in the wings {\it should}
contribute then $\lambda^{-s}$).  

\end{remark}

\begin{proof}
Let us drop the cumbersome $(x',y')$ notation and write $(x,y)$
instead.  Our quasimode $v$ satisfies
\[
\begin{cases}
 -\tDelta v = \lambda^2 v + \tE, \\
v|_{\p \tOmega} = 0,
\end{cases}
\]
where $\| \tE \| = \O( \lambda^{-\epsilon_0 } ) \| v \|$.  Let us also
assume that $\| v \| = 1$ for simplicity.  Our strategy is to use
integrations by parts and the quasimode equation for $v$ to write
(\ref{E:y-der'}-\ref{E:yy-der'}) in terms of \eqref{E:xx-der'}.  Each
estimate will have a power of $\lambda$ plus a term involving
\eqref{E:xx-der'}, but with a small coefficient.  This will allow for
us to finally solve for \eqref{E:xx-der'}.  
We will use that the previously defined functions $A$ and $B$ satisfy
\[
| A | \sim y^2 R^{2\gamma}, \,\,\, |B| \sim y R^{\gamma}.
\]
Unfortunately, a smaller power of $R$ shows up for the $\p_y^2$ terms in our applications
than in the expression for $\p_x^2$ above (that is, $R^{\gamma +1}$ as
opposed to $R^{2 \gamma}$), so we have stated the Lemma for the
smaller power.

We begin with \eqref{E:y-der'}.  We write
\begin{align*}
\| H \psi \p_y v \|^2 & = \int \p_y v H \psi^2 \p_y \bar{v} dx dy \\
& = \int (-\p_v^2 v ) H \psi^2 \bar{v} dx dy \\
& = \int (-\tDelta v) H \psi^2 \bar{v} dx dy + \int (\p_x^2 - p_x B
Y^{-2} \p_y - p_y B
Y^{-2} \p_x )v H \psi^2 \bar{v} dxdy \\
& =: I_1 + I_2.
\end{align*}
Computing:
\begin{align*}
I_1 & = \int (\lambda^2 v + \tE ) H \psi^2 \bar{v} dx dy \\
& \leq C ( \lambda^2 \| H \psi v \|^2 + \| \tE \| \| H \psi v \| ) \\
& \leq C ( \lambda^{2-2s} + \lambda^{-\epsilon_0 -s}
) \\
& \leq C \lambda^{2-2s},
\end{align*}
since $0 \leq s \leq 2$.  
Further, 
\begin{align*}
I_2 
& \leq \| H \psi \p_x^2 v \| \| H \psi v \| \\
& \quad + \left| \int ( \p_x B Y^{-2} + B Y^{-2} \p_x )v H \psi^2
  \p_y \bar{v} dx dy \right| \\
& \quad + C\left| \int \p_x B_y Y^{-2} v H \psi^2 \bar{v} dx dy \right|
\\
& \leq C \Big(   \lambda^{-s} \| H \psi \p_x^2 v \|
+ (\| R^{\gamma-1} H \psi v \| + \| R^\gamma \psi \p_x v \| )   \| H \psi \p_y v \| \\
& \quad +  (\| R^{\gamma -1} H \psi v \| + \| R^\gamma \psi \p_x v \| )   \| H \psi  v \|
\Big) \\
& \leq C  \Big(    \lambda^{-s} \| H \psi \p_x^2 v \|
+ (\lambda^{-s-p(\gamma-1)}  + \lambda^{-p \gamma} \| H \psi \p_x v
\| ) (\| H \psi \p_y v \| + \lambda^{-s} )  \Big).
\end{align*}
Here we have used that $| B | \leq C | R^\gamma |$ and that $| R^\gamma \psi | \leq
C\lambda^{-p\gamma} | \psi |$.  
Applying Cauchy's inequality (with small parameter on the terms with
$\p_y v$), increasing $C$ as necessary, and
dropping all the terms which are smaller in $\lambda$, we have
\begin{align*}
I_2 & \leq C \Big( \lambda^{2-2s}  + \lambda^{-2} \| H \psi \p_x^2 v
\|^2 
+ \lambda^{-2 p \gamma} \| H \psi \p_x v \|^2 \Big)
+ \frac{1}{2} \| H
\psi \p_y v \|^2.
\end{align*}

Collecting all terms from $I_1$ and $I_2$ and keeping only the largest
in $\lambda$, and solving for $\| H \psi \p_y v \|^2$ as before, we have
\begin{equation}
\| H \psi \p_y v \|^2  \leq C \Big( \lambda^{2-2s}  + \lambda^{-2} \| H \psi \p_x^2 v
\|^2 
+ \lambda^{-2 p \gamma} \| H \psi \p_x v \|^2 \Big)
\label{E:y-deriv-1'}
\end{equation}

From \eqref{E:small-2a'}, we have
\[
\left\| H \psi  \left(-\frac{1}{2} Y' Y^{-3/2} v  + Y^{-1/2} \left(
\p_x v - \p_y v \left( y\frac{ Y'}{Y}
\right) \right)  \right)  \right\| \leq \lambda^{1-s},
\]
or, since $Y$ is bounded above and below and $| Y' | \leq C R^\gamma$ on the
support of $\psi$, we
have using \eqref{E:y-deriv-1'},
\begin{align*}
\| H \psi \p_x v \| & \leq C ( \lambda^{1-s} + \| R^\gamma \psi v \| +
\| R^\gamma \psi \p_y v \| ) \\
& \leq C \left( \lambda^{1-s} + \lambda^{-p \gamma} \| H
  \psi \p_y v \| \right) \\
& \leq C \left( \lambda^{1-s} +\lambda^{-2p \gamma} \| H \psi \p_x v \|
+ \lambda^{-1-p \gamma} \| H
  \psi \p_x^2 v \| \right).
\end{align*}
For $\lambda$ sufficiently large, we can solve for the $\p_x$ terms to
get
\begin{equation}
\label{E:x-deriv-1'}
\| H \psi \p_x v \| \leq C ( \lambda^{1-s} + \lambda^{-1-p\gamma} \| H
  \psi \p_x^2 v \| ).
\end{equation}
Plugging this back into \eqref{E:y-deriv-1'}, we get
\begin{equation}
\label{E:y-deriv-2'}
\| H \psi \p_y v \| \leq C \left(  \lambda^{1-s} 
+ \lambda^{-1} \| H \psi \p_x^2 v \| 
  \right) .
\end{equation}
This gives our preliminary estimates for (\ref{E:y-der'}-\ref{E:x-der'}).

We now estimate the mixed second partial as before.  
Again choose $\tpsi \in \Ci_c( \reals_x)$ with support in $\{ | x |
\leq a + 2c \lambda^{-p} \}$ such that $\tpsi \equiv 1$
on $\supp \psi$ and $| \p^k \tpsi | \leq C_k \lambda^{k p}$ as well. 

We compute: 
\begin{align*}
\| &  R^\gamma \psi \p_x \p_y v \|^2  = \int (\p_x \p_y v) R^{2\gamma} \psi^2 \p_x \p_y
\bar{v} dx dy \\
& = \int \p_x (-\p_y^2v) R^{2\gamma} \psi^2 \p_x \bar{v} dx dy \\
& = \int \p_x ( -\tDelta v) R^{2\gamma} \psi^2 \p_x \bar{v} dx dy + \int
\p_x^3 v R^{2\gamma} \psi^2 \p_x \bar{v} dx dy \\
& \quad - \int ( \p_x^2 B Y^{-2} \p_y  + \p_x \p_y B Y^{-2} \p_x ) v
R^{2\gamma} \psi^2 \p_x \bar{v} dx dy \\
& =: I_1 + I_2 + I_3.
\end{align*}
We estimate
\begin{align*}
I_1 & = \int \p_x ( \lambda^2 v + \tE) R^{2\gamma} \psi^2 \p_x \bar{v} dx dy
\\
& = \lambda^2 \| R^\gamma \psi \p_x v \|^2 - \int \tE \p_x R^{2\gamma} \psi^2 \p_x
\bar{v} dx dy \\
& \leq C \lambda^{2-2p\gamma} \| H \psi \p_x v \|^2 \\
& \quad - \int \tE ( 2\gamma R' R^{2\gamma-1} \psi^2 \p_x + 2 \psi' \psi R^{2\gamma} \p_x + R^{2\gamma}
\psi^2 \p_x^2 )\bar{v} dx dy \\
& \leq C \lambda^{2 - 2 p \gamma} \| H \psi \p_x v \|^2 \\
& \quad + C \| \tE \| \left( \| R^{2 \gamma -1} \psi  \p_x v \| + \| R^{2\gamma} \psi' \psi
  \p_x v \| + \| R^{2\gamma} \psi \p_x^2 v \| \right) \\
& \leq C \lambda^{2-2p\gamma} \| H \psi \p_x v \|^2 \\
& \quad + C \lambda^{-\epsilon_0}
 \left( 
\lambda^{-p(2 \gamma-1)}\| H \psi  \p_x
  v \|  + \lambda^{-2p\gamma} \| H \psi \p_x^2  v \| \right) .
\end{align*}
Again applying Cauchy's inequality as necessary and keeping the
largest terms in $\lambda$, we have 
\begin{align*}
I_1 & \leq C \lambda^{2-2p\gamma} ( \lambda^{2-2s} +
\lambda^{-2-2p\gamma} \| H \psi \p_x^2 v \|^2 ) \\
& \quad + C \lambda^{-\epsilon_0} \left(
\lambda^{-p(2 \gamma-1)} ( \lambda^{1-s}  +  \lambda^{-1-p\gamma} \| H
\psi \p_x^2 v \| )
+ \lambda^{-2p\gamma} \| H \psi \p_x^2  v \| \right) \\
& \leq C \Big[  \lambda^{4 -2s - 2 p \gamma} + \lambda^{-4 p \gamma}
\| H \psi \p_x^2 v \|^2 \\
& \quad + \lambda^{1-s-p(2 \gamma-1) - \epsilon_0} 
+ (\lambda^{-1-p\gamma + p - \epsilon_0}) ( \lambda^{-2p\gamma} \| H
\psi \p_x^2 v \| ) \\
& \quad + (\lambda^{-p(2\gamma -1) - \epsilon_0}) ( \lambda^{-2p\gamma} \| H
\psi \p_x^2 v \| )
\Big] \\
& \leq C\Big[
\lambda^{4 -2s - 2 p \gamma} + \lambda^{-4 p \gamma}
\| H \psi \p_x^2 v \|^2 
+ \lambda^{-2p(2
  \gamma -1) - 2 \epsilon_0} \Big].
\end{align*}
Here we have used \eqref{E:x-deriv-1'} and that $0 < p \leq 1$ and $0
\leq s \leq 2$ implies
\[
1-s+p-\epsilon_0 \leq 2-s -\epsilon_0 = 2 - 2s +s - \epsilon_0 \leq 4
-2s - \epsilon_0,
\]
and
\[
-2+2p -2p\gamma - 2 \epsilon_0 \leq -2 p \gamma-2 \epsilon_0 +2s -2s
\leq -2p\gamma -2 \epsilon_0 + 4 -2s.
\]


For the integral $I_2$, we compute
\begin{align*}
I_2 & = - \int \p_x^2 v ( 2\gamma R' R^{2\gamma-1} \psi^2 \p_x + 2 \psi' \psi R^{2\gamma} \p_x +
R^{2\gamma} \psi^2 \p_x^2 ) \bar{v} dx dy \\
& \leq C \Big( \| \psi H \p_x^2 v \| \| R^{2\gamma-1} \psi \p_x v \|  
+ \| R^{2\gamma} \psi' \p_x^2 v \| \| H \psi \p_x v \|   
+ \| R^\gamma \psi \p_x^2 v \|^2 \Big) \\
& \leq C \Big(   \lambda^{-p(2 \gamma-1)} \| \tpsi H \p_x^2 v \| \| H
\psi \p_x v \|  
+ \lambda^{-2 p \gamma} \| H \psi \p_x^2 v \|^2 \Big) \\
& \leq C \Big( \lambda^{2 -p(2 \gamma -1)} \| \psi H \p_x v \| +
\lambda^{-2p\gamma } \| H \psi \p_x^2 v \|^2 \Big) \\
& \leq C  \Big( \lambda^{2 -p(2 \gamma -1)} (\lambda^{1-s} +
\lambda^{-1-p\gamma}   \| H \psi \p_x^2 v \|) +
\lambda^{-2p\gamma } \| H \psi \p_x^2 v \|^2 \Big) \\
 & \leq C  \Big( \lambda^{3-s -p(2 \gamma -1)} +
(\lambda^{1-p(2\gamma-1)})  (\lambda^{-p\gamma}   \| H \psi \p_x^2 v \| )+
\lambda^{-2p\gamma } \| H \psi \p_x^2 v \|^2 \Big),
\end{align*}
where we have again used \eqref{E:x-deriv-1'}.  Applying Cauchy's
inequality as before, we get 
\begin{align*}
I_2 & \leq C 
\big(
\lambda^{3-s -p(2 \gamma -1)}
+ \lambda^{2-2p(2\gamma-1)}
+ \lambda^{-2p\gamma } \| H \psi \p_x^2 v \|^2 \Big)
\end{align*}

We now estimate $I_3$:
\begin{align*}
I_3 & = -\int (\p_x^2 B Y^{-2} \p_y + \p_x \p_y B Y^{-2} \p_x )v  R^{2\gamma}
\psi^2 \p_x \bar{v} dx dy \\
& = \int ( \p_x B Y^{-2} \p_y v) (\p_x \psi^2 R^{2\gamma} \p_x \bar{v} ) dx dy
+ \int (\p_x B Y^{-2} \p_x v ) (\psi^2 R^{2\gamma} \p_x \p_y \bar{v} ) dx dy
\\
& =: J_1 + J_2.
\end{align*}
Recalling the estimates on $B$ in terms of $R$, we have
\begin{align*}
J_1 & = \int ( (B Y^{-2} )_x \p_y v + B Y^{-2} \p_x \p_y v ) \\
& \quad \cdot ( 2\gamma R' R^{2\gamma-1} \psi^2 \p_x + 2 \psi' \psi R^{2\gamma} \p_x +
R^{2\gamma} \psi^2 \p_x^2 ) \bar{v} dx dy \\
& \leq C \Big(  \| R^{3 \gamma-2} \psi \p_y v \| \| H \psi \p_x v \|
+ \| H \tpsi \p_y v \|   \| R^{3 \gamma-1} \psi' \psi \p_x v \| \\
& \quad + \| H \psi \p_y v \| \| R^{3 \gamma -1} \psi \p_x^2 v \| 
+ \| R^\gamma \psi \p_x \p_y v \| \| R^{2\gamma-1} \psi \p_x v \|  \\
& \quad + \| R^\gamma \psi
\p_x \p_y v \| \| \tpsi R^{2 \gamma} \psi' \p_x v \|  + \| R^\gamma \psi \p_x \p_y v \| \|
R^{2\gamma} \psi \p_x^2 v \| \Big) \\
& \leq C \Big[
\lambda \lambda^{-p(3\gamma-2)} \| H \psi \p_x v \|
+ \lambda^{-p(3 \gamma-1)} \| H \psi \p_y v \| \| H \psi \p_x^2 v \|
\\
& \quad + \Big( \lambda^{-p(2 \gamma-1)} \| H \psi \p_x v \|
+ \lambda \lambda^{-p(2\gamma-1)}
+ \lambda^{-2 p\gamma} \| H \psi \p_x^2 v \| \Big) \| R^\gamma \psi
\p_x \p_y v \| \Big] \\
& \leq C \Big[
\lambda^{1-p(3\gamma-2)} ( \lambda^{1-s} + \lambda^{-1-p\gamma} \| H
\psi \p_x^2 v \| ) \\
& \quad 
+ \lambda^{-p(3\gamma-1)} ( \lambda^{1-s} + \lambda^{-1} \| H
\psi \p_x^2 v \| )  \| H \psi \p_x^2 v \| \\
& \quad + \Big( \lambda^{-p(2 \gamma-1)} ( \lambda^{1-s} + \lambda^{-1-p\gamma} \| H
\psi \p_x^2 v \| ) \\
& \quad 
+  \lambda^{1-p(2\gamma-1)}
+ \lambda^{-2 p\gamma} \| H \psi \p_x^2 v \| \Big) \| R^\gamma \psi
\p_x \p_y v \| \Big]
\end{align*}
Here we have used (\ref{E:x-deriv-1'}-\ref{E:y-deriv-2'}) and the global
$\O( \lambda )$ bound on $\| \p_y v \|$.  Keeping only the largest
terms as always and applying 
Cauchy's inequality with small parameter, we have
\begin{align*}
J_1 & \leq C \Big[
\lambda^{2-s-p(3\gamma-2)}
+( \lambda^{-p(3 \gamma-2)}) ( \lambda^{-p\gamma} \| H \psi \p_x^2 v
\| ) \\
& \quad + (\lambda^{-1-p(2\gamma-1)}) ( \lambda^{-p\gamma} \| H \psi \p_x^2 v
\| ) \\
& \quad + \lambda^{-1-p(3 \gamma -1)}\| H \psi \p_x^2 v \|^2 \\
& \quad + (\lambda^{1-p(2\gamma-1)}
+ \lambda^{-2 p\gamma} \| H \psi \p_x^2 v \| ) \| R^\gamma \psi
\p_x \p_y v \| \Big] \\
& \leq C \Big[
\lambda^{2-s-p(3\gamma-2)}
+ \lambda^{-2p(3 \gamma-2)} + \lambda^{2-2p(2\gamma-1)} + \lambda^{-2p\gamma} \| H \psi \p_x^2 v
\|^2  \Big] \\
& \quad 
+ \frac{1}{4}\| R^\gamma \psi
\p_x \p_y v \| .
\end{align*}

We still have to estimate $J_2$ (here again we use Cauchy's inequality
with small parameter and throw out terms lower order in $\lambda$):
\begin{align*}
J_2 & = \int (\p_x B Y^{-2} \p_x v ) (\psi^2 R^{2\gamma} \p_x \p_y \bar{v} )
dx dy \\
& = \int \left( (BY^{-2})_x \p_x v + B Y^{-2} \p_x^2 v \right) \psi^2
R^{2\gamma} \p_x \p_y \bar{v} dx dy \\
& \leq C \left( \| R^{2 \gamma-1} \psi \p_x v \| + \| R^{2\gamma} \psi \p_x^2 v \| \right)
\| R^{\gamma} \psi \p_x \p_y v \| \\
& \leq C  \left( \| R^{2 \gamma -1} \psi \p_x v \|^2 + \| R^{2\gamma}  \psi \p_x^2 v \|^2
\right) + \frac{1}{4} \| R^\gamma \psi \p_x \p_y v \|^2 \\
& \leq C \left(    \lambda^{-2p (2 \gamma -1)}   \| H \psi \p_x v \|^2
  + \lambda^{-4 p \gamma} \| H \psi \p_x^2 v \|^2
\right) 
+ \frac{1}{4} \| R \psi \p_x \p_y v \|^2 \\
& \leq C \left( \lambda^{2-2s-2 p(2 \gamma -1)} + \lambda^{-4 p
    \gamma} \| H \psi \p_x^2 v \|^2 \right) + \frac{1}{4} \| R \psi \p_x \p_y v \|^2.
\end{align*}
Collecting the largest terms in $\lambda$ from $J_1$ and $J_2$, we
have
\begin{align*}
I_3 & \leq C \Big(
\lambda^{2-s-p(3\gamma-2)}
+ \lambda^{-2p(3 \gamma-2)} + \lambda^{2-2p(2\gamma-1)} \\
& \quad + \lambda^{-2p\gamma} \| H \psi \p_x^2 v
\|^2
+\lambda^{2-2s-2 p(2 \gamma -1)} + \lambda^{-4 p
    \gamma} \| H \psi \p_x^2 v \|^2
 \Big) + \frac{1}{2} \| R \psi \p_x \p_y v \|^2 \\
& \leq C \Big(
\lambda^{2-s-p(3\gamma-2)}
+ \lambda^{-2p(3 \gamma-2)} + \lambda^{2-2p(2\gamma-1)} \\
& \quad + \lambda^{-2p\gamma} \| H \psi \p_x^2 v
\|^2  \Big) + \frac{1}{2} \| R \psi \p_x \p_y v \|^2.
\end{align*}
Finally summing $I_1 + I_2 + I_3$ and keeping only the largest terms
in $\lambda$, we get
\begin{align*}
\| R \psi \p_x \p_y v \|^2
& \leq 
C \Big(  
\lambda^{4 -2s - 2 p \gamma} + \lambda^{-4 p \gamma}
\| H \psi \p_x^2 v \|^2 
+ \lambda^{-2p(2
  \gamma -1) - 2 \epsilon_0} \\
& \quad +\lambda^{3-s -p(2 \gamma -1)}
+ \lambda^{2-2p(2\gamma-1)}
+ \lambda^{-2p\gamma } \| H \psi \p_x^2 v \|^2
\\
& \quad +\lambda^{2-s-p(3\gamma-2)}
+ \lambda^{-2p(3 \gamma-2)} + \lambda^{2-2p(2\gamma-1)} \\
& \quad + \lambda^{-2p\gamma} \| H \psi \p_x^2 v
\|^2
\Big) + \frac{1}{2} \| R
\psi \p_x \p_y v \|^2 \\
& \leq C \Big( 
\lambda^{4 -2s - 2 p \gamma} 
+\lambda^{3-s -p(2 \gamma -1)}
+ \lambda^{2-2p(2\gamma-1)} \\
& \quad + \lambda^{-2p\gamma} \| H \psi \p_x^2 v
\|^2\Big) + \frac{1}{2} \| R
\psi \p_x \p_y v \|^2,
\end{align*}
which, after rearranging, implies
\begin{equation}
\label{E:xy-deriv-1'}
\| R \psi \p_x \p_y v \| \leq C \Big( \lambda^{2 -s -  p \gamma} +\lambda^{(3-s -p(2 \gamma -1))/2}
+ \lambda^{1-p(2\gamma-1)} +
\lambda^{-p\gamma}  \| H \psi \p_x^2 v
\| \Big)
) .
\end{equation}

Now we can use the triangle inequality, together with the estimates
already proved, to write the \eqref{E:yy-der'} in terms of
\eqref{E:xx-der'}:
\begin{align}
\| & R^{\gamma +1} \psi \p_y^2 v \| \\
& \leq \| R^{\gamma+1} \psi \tDelta
v \| + \| R^{\gamma +1} \psi
\p_x^2 v \| + \|R^{\gamma +1} \psi \p_x B Y^{-2} \p_y v \| \notag \\
& \quad +
\|R^{\gamma +1} \psi \p_y B
Y^{-2} \p_x v \| \notag \\
& \leq  \lambda^2 \| R^{\gamma +1} \psi  v \| + \| R^{\gamma +1} \psi \tE \| + \| R^{\gamma +1} \psi
\p_x^2 v \| \notag  \\
& \quad + \|R^{\gamma +1} \psi \p_x B Y^{-2} \p_y v \| + \|R^{\gamma +1} \psi \p_y B
Y^{-2} \p_x v \|  \notag \\
& \leq C \Big(  
\lambda^{2 - p(\gamma +1)} \| H \psi v \|
+ \lambda^{-p(\gamma +1)} \| \tE \|
+  \lambda^{-p(\gamma +1)} \| H \psi \p_x^2 v \|  \notag \\
& \quad 
+ \| R^{\gamma +1} \psi (B Y^{-2})_x \p_y v \|
+ \| R^{\gamma +1} \psi B Y^{-2} \p_x \p_y v \| \notag \\
& \quad 
+ \| R^{\gamma +1} \psi (B Y^{-2})_y \p_x v \|
\Big)  \notag \\
& \leq C \Big( 
\lambda^{2 - s-p(\gamma +1)} 
+ \lambda^{-p(\gamma +1) - \epsilon_0 } 
+ \lambda^{-p(\gamma +1)} \| H \psi \p_x^2 v \|  \notag \\
& \quad 
+ \lambda^{-2 p \gamma} \|  H\psi \p_y v \|
+ \lambda^{-p(\gamma+1)} \|  R^\gamma \psi  \p_x \p_y v \|
+ \lambda^{-p(2 \gamma +1)} \| H \psi  \p_x v \|
\Big)  \notag \\
& \leq C \Big(
\lambda^{2 - s-p(\gamma +1)} 
+ \lambda^{-p(\gamma +1) - \epsilon_0 } 
+ \lambda^{-p(\gamma +1)} \| H \psi \p_x^2 v \|  \notag \\
& \quad 
+ \lambda^{-2 p \gamma} ( \lambda^{1-s} + \lambda^{-1} \| H \psi
\p_x^2 v \|) \notag \\
& \quad 
+ \lambda^{-p(\gamma+1)} (  \lambda^{2 -s -  p \gamma} +\lambda^{(3-s
  -p(2 \gamma -1))/2} \\
& \quad 
+ \lambda^{1-p(2\gamma-1)} +
\lambda^{-p\gamma}  \| H \psi \p_x^2 v
\|   )
\Big)  \notag \\
& \leq C \Big( 
\lambda^{2-s-p(\gamma +1)} + \lambda^{1-3 p \gamma} +
\lambda^{-p(\gamma +1)} \| H \psi \p_x^2 v \| 
\Big)
\label{E:yy-deriv-1'}
\end{align}
Here to get the last inequality, we want to absorb the term with
\[
\lambda^{-p(\gamma +1) + (3 -s -p(2 \gamma -1))/2}
\]
in the stated two $\lambda$ terms.  To do this, we yet again appeal to
Cauchy's inequality by observing that the exponent satisfies
\[
-p(\gamma +1) + (3 -s -p(2 \gamma -1))/2 = \frac{1}{2} (
(2-s-p(\gamma+1)) + (1-3 p \gamma)).
\]

We now want to close the loop of our argument by using the a priori
assumed bounds in $u_x$ and $u_{xx}$ together with
(\ref{E:x-deriv-1'}-\ref{E:yy-deriv-1'}).    That is, from
(\ref{E:small-2a'}-\ref{E:small-3a'}), using once again that $| Y' |
\leq C R^\gamma$, we have
\begin{align}
\| H \psi \p_x^2 v \| & \leq C ( \lambda^{2-s}
+ \| R^\gamma \psi \p_x \p_y v \|
+ \| R^{2\gamma} \psi \p_y^2 v \| \notag \\
& \quad 
+ \| R^\gamma \psi \p_x v \|
+ \| R^{2\gamma} \psi \p_y v \|
+ \| R^{\gamma -1}H \psi v \| ).
\label{E:small-3b'}
\end{align}
We first use similar estimates to pull out the appropriate powers of
$R$ in \eqref{E:small-3b'} and then plug in (\ref{E:x-deriv-1'}-\ref{E:yy-deriv-1'}) into \eqref{E:small-3b'}:
\begin{align*}
\| H \psi \p_x^2 v \| & \leq C \Big( \lambda^{2-s}
+ \| R^\gamma \psi \p_x \p_y v \|
+ \lambda^{-p (\gamma -1)}\| R^{\gamma+1} \psi \p_y^2 v \|  \\
& \quad 
+ \lambda^{-p \gamma} \| H \psi \p_x v \|
+ \lambda^{-2p\gamma} \| H \psi \p_y v \|
+ \lambda^{-p(\gamma -1) -s} \Big) \\
& \leq C \Big(
\lambda^{2-s}
+ (
\lambda^{2 -s -  p \gamma} +\lambda^{(3-s -p(2 \gamma -1))/2}
+ \lambda^{1-p(2\gamma-1)} +
\lambda^{-p\gamma}  \| H \psi \p_x^2 v
\| 
 )
\\
& \quad 
+\lambda^{-p (\gamma -1)}
(\lambda^{2-s-p(\gamma +1)} + \lambda^{1-3 p \gamma} +
\lambda^{-p(\gamma +1)} \| H \psi \p_x^2 v \| )
 \\
& \quad 
+       \lambda^{-p \gamma} ( \lambda^{1-s} + \lambda^{-1-p \gamma} \| H
\psi \p_x^2 v \| )
\\
& \quad 
+ \lambda^{-2p\gamma} ( \lambda^{1-s} + \lambda^{-1} \| H
\psi \p_x^2 v \| )
\\
& \quad + \lambda^{ -p(\gamma -1) -s } \Big).
\end{align*}
As usual, keeping only the largest terms in $\lambda$, we have
\[
\| H \psi \p_x^2 v \| \leq C ( 
\lambda^{2-s}
+ \lambda^{(3-s -p(2 \gamma -1))/2}
+ \lambda^{1-p(2\gamma-1)} +
\lambda^{-p\gamma}  \| H \psi \p_x^2 v
\| 
) .
\]
For $\lambda$ sufficiently large, this implies
\begin{equation}
\label{E:small-3c'}
\| H \psi \p_x^2 v \| \leq C  ( \lambda^{2-s}
+ \lambda^{(3-s -p(2 \gamma -1))/2}
+ \lambda^{1-p(2\gamma-1)}) .
\end{equation}
This is \eqref{E:xx-der'}, once we observe that (again by Cauchy's
inequality)
\[
\lambda^{(3-s -p(2 \gamma -1))/2} = \lambda^{\frac{1}{2} ((2-s) +
  (1-p(2 \gamma -1)))} \leq \frac{1}{2} \left(  \lambda^{2-s}
+ \lambda^{1-p(2\gamma-1)}\right)
.  
\]

The very last step to close the loop is to plug \eqref{E:small-3c'}
into (\ref{E:x-deriv-1'}-\ref{E:yy-deriv-1'}) to recover
\eqref{E:y-der'}, \eqref{E:x-der'}, \eqref{E:xy-der'}, and \eqref{E:yy-der'}.  This completes the proof.

\end{proof}


\begin{proof}[Proof of Theorem \ref{C:stad-non-conc-2}]
We follow the proof of Theorem \ref{T:stad-non-conc} and point out
where to make the changes for $C^{k,\alpha}$ boundary with $k +
\alpha>2$.  
The main differences are that, in the notation of the previous
section, the function $Y(x) = \pi + r(x)$ is $C^{k, \alpha}$ and piecewise $C^\infty$ with $r(x) \equiv
0$ for $x \in [-a,a]$, so that $r(x) = 
O(|\pm x-a|^{k + \alpha})$ as $\pm x \to a+$.  This means the function $A = (y
Y'(x))^2 = y^2 \O( | \pm x - a |^{2(k-1 + \alpha)})$ as $x$ approaches the interval
$[-a,a]$ from without, and the function $B = y Y'(x) Y(x) = y \O( | \pm
x-a|^{k-1+\alpha})$.

The proof proceeds by contradiction.  Suppose for some fixed
$\delta_0>0$ (\ref{E:lower-1a}-\ref{E:lower-3a}) are all false with 
\[
s_{\delta_0} = 1+ \max \left\{ \frac{1}{k + \alpha }, \frac{1 +
    \delta_0}{2(k + \alpha) -3} \right\} + \delta_0.
\]
That is, we assume 
\begin{equation}
\label{E:small-3a''}
\| (\lambda^{-1} D_x)^k u \|_{L^2(\Omega \setminus R)}  \leq   C
\lambda^{-s_{\delta_0}} 
\| u \|_{L^2(\Omega) }
\end{equation}
for $k = 0,1,2$.



We change coordinates as in the previous section, and let $v$,
$\tOmega$, $\tR$ be $u$, $\Omega$, and $R$ in the new coordinates as
before.  Our assumptions \eqref{E:small-3a''} on the quasimode $u$
imply that the function $v$ satisfies
(\ref{E:small-1a'}-\ref{E:small-3a'}) with $s = s_{\delta_0}$.
We observe that, since $k + \alpha>2$, shrinking $\delta_0>0$ as
necessary implies $s \leq 2$.  
Let 
\[
p = \max \left\{ \frac{1}{k + \alpha }, \frac{1 +
    \delta_0}{2(k + \alpha) -3} \right\} = \frac{s-\delta_0}{2} < 1.
\]

Let 
$\chi \in \Ci_c$ be a smooth function such that $\chi (x) \equiv
1$ on $\{ | x | \leq a \}$ with support in $\{ | x | \leq a +
\lambda^{-p} \}$.  As before, this implies
\[
| \p^m \chi | \leq C_m \lambda^{mp}.
\]
As in Lemma \ref{L:der-2}, let $\psi \in \Ci_c$ have support in $\{ |
x | \leq a + c \lambda^{-p} \}$ for $c>1$ sufficiently large.  Assume
also that $\psi \equiv 1$ on $\supp \chi$.  


Our strategy is to show that for this $s$, $p$, and $\chi$, that $\chi
v$ is an $\O(\lambda^{-\delta_0}$ quasimode for a similarly modified
operator to that in the proof of Theorem \ref{T:stad-non-conc}.  
As before,
\[
-\tDelta \chi v = \lambda^2 \chi v + \O( \lambda^{-\epsilon_0}) \| v \|
- [\tDelta, \chi] v,
\]
and we need to estimate the terms in the commutator.  Recall our
notation from Lemma \ref{L:der-2} of $\gamma = k + \alpha -1$, the
ramp function $R$ and the Heaviside function $H$.  Let us only examine
the quasimode in the right wing; the analysis in the left wing is
completely analogous.  This merely allows us to substitute $R$ and $H$
where convenient to directly apply Lemma \ref{L:der-2}.  

We have
\begin{align*}
\|[\p_x^2, \chi] v\| & \leq  2\| \chi' \p_x v\|+ \|\chi'' v\| \\
& \leq C ( \lambda^{p} \| H \psi \p_x v \| + \lambda^{2p} \| H \psi v
\| ) \\
& \leq C \max \{ \lambda^{p + 1-s}, \lambda^{p-p(3\gamma-1)},
\lambda^{2p-s} \} \\
& \leq C \max \{ \lambda^{p + 1-s}, \lambda^{p-p(3\gamma-1)} \},
\end{align*}
since $p < 1$.  This expression is bounded by
\[
\O(  \lambda^{-\delta_0} ) \| v \| ,
\]
since 
\[
p + 1-s = -\delta_0
\]
by definition, and
\[
p-p(3\gamma-1) \leq 1-p(2\gamma -1) \leq -\delta_0.
\]

The mixed term commutator is
\begin{align*}
\left\| \left[ \p_x \frac{B}{Y^2} \p_y, \chi \right] v \right\| & =
\left\| \frac{B}{Y^2} \chi' \p_y v \right\| \\
& \leq C \lambda^p \| R^{\gamma} H \psi \p_y v \| \\
& \leq C \lambda^{p - p\gamma} \| H \psi \p_y v \|\\
& \leq C \max \{ \lambda^{1-s-p(\gamma-1)}, \lambda^{-p(3 \gamma -2)}
\}
& = \O ( \lambda^{-\delta_0} ) ,
\end{align*}
since
\[
1-s-p(\gamma-1) \leq 1-s \leq p+1 -s = -\delta_0,
\]
and
\[
-p(3\gamma-2) \leq -\delta_0
\]
was already estimated above.



We still need to control the mixed terms:
\begin{align*}
& \left\| \p_x \frac{B}{Y^2} \p_y \chi v \right\| \\
& \quad    \leq \left\| \left(
    \frac{B}{Y^2} \right)_x \chi \p_y v \right\| + \left\| 
    \frac{B}{Y^2}  \chi' \p_y v \right\| + \left\| 
    \frac{B}{Y^2}  \chi  \p_x \p_y v \right\| \\
& \quad \leq C \left( \| R^{\gamma-1} \chi \p_y v \| + \| R^\gamma
  \chi' \p_y v \| + \| R^\gamma \chi \p_x \p_y v \| \right) \\
& \quad \leq C ( \lambda^{-p(\gamma-1) } \| H \psi \p_y v \| + \|
R^\gamma \psi \p_x \p_y v \| ) \\
& \quad \leq C 
\max\{ \lambda^{-p(\gamma-1) + 1-s}, \lambda^{-p(\gamma-1)
  -p(2\gamma-1)}, \lambda^{2-s-p\gamma}, \lambda^{1-p(2\gamma-1)},
\lambda^{(2-s-p(3\gamma-2))/2} \} \\
& \quad \leq C \lambda^{-\delta_0}.
\end{align*}
To get this last estimate, we use
\[
-p(\gamma-1) + 1-s \leq 1+p-s = -\delta_0,
\]
\[
-p(\gamma-1)
  -p(2\gamma-1) = -p(3\gamma-2) \leq -p(2\gamma-1) \leq -\delta_0,
\]
since $\gamma >1$, 
\[
2-s-p\gamma = 2 -  (p+1+\delta_0)   -p\gamma = 1-\delta_0 -p(1 +
\gamma) \leq -\delta_0,
\]
and
\[
1-p(2\gamma-1) \leq -\delta_0.
\]
For the last exponent, we use Cauchy's inequality again: 
\[
\lambda^{1-\frac{s}{2}-\frac{p}{2}(3\gamma-2)} \leq \frac{1}{2} (\lambda^{2-s-p\gamma}
+ \lambda^{-p(2\gamma-2)} ) \leq C \lambda^{-\delta_0},
\]
since the first term has already been estimated, while
\[
-p(2\gamma -2) = -p(2\gamma-1) + p \leq 1 - p(2\gamma -1) \leq
-\delta_0.
\]

The other mixed term is handled similarly.
We further compute 
\begin{align*}
\left\| \p_y A Y^{-2} \p_y \chi v \right\| 
& \leq C ( \| R^{2\gamma} \psi \p_y v \| + \| R^{2 \gamma} \psi \p_y^2
v \| ) \\
& \leq C ( \lambda^{-2p\gamma} \| H \psi \p_y v \| +
\lambda^{-p(\gamma-1)} \| R^{\gamma+1} \psi \p_y^2 v \| ) \\
& \leq C \max \{ \lambda^{1-s-2p\gamma}, \lambda^{-4p\gamma +p},
\lambda^{2-s-2p\gamma}, \lambda^{1-4p\gamma + p} \} \\
& \leq C \lambda^{-\delta_0},
\end{align*}
since
\[
1-s-2p\gamma \leq 1 + p -s = -\delta_0,
\]
\[
-p(4\gamma -1) \leq -p(2\gamma-1) \leq -\delta_0,
\]
\[
2-s-2p\gamma \leq 2-s-p\gamma \leq -\delta_0,
\]
and
\[
1-p(4\gamma -1) \leq 1-p(2\gamma-1) \leq -\delta_0.
\]


This means that $\chi v$ is an $\O ( \lambda^{- \delta_0 } )$
quasimode for the reduced operator:
\[
(P - \lambda^2) \chi v = -( \p_x^2 + Y^{-2} (x) \p_y^2 - \lambda^2) \chi v = \O (
\lambda^{-\delta_0} ) \| \chi v \|,
\]
since, as before, $\| \chi v \| = \| v \| - \O ( \lambda^{-s} ) \| v
\|$.  Choosing once again a $0$-Gevrey function $\tY$ such that
$\tY(x) \equiv \pi$ for $x \in [-a,a]$ and $\tY'(x) < 0$ for $x <
-a$ in a neighbourhood of the support of $\chi$.  Then, on the support
of $\chi$, we have $|Y^{-2} - \tY^{-2} |\sim R^{\gamma +1}$ (in the
right wing, and again a similar expression holds in the left wing).   We again write $\tP = -(
\p_x^2 + \tY^{-2} (x) \p_y^2 - \lambda^2)$, so that
\begin{align*}
(\tP - \lambda^2 ) \chi v & = ( P - \lambda^2 ) \chi v + (Y^{-2} -
\tY^{-2} ) \p_y^2 \chi v \\
& = \O ( \lambda^{ -\delta_0} ) \| \chi v \| + C \| R^{\gamma +1} \psi
\p_y^2 v \| \\
& = \O ( \lambda^{ -\delta_0 } + \max\{\lambda^{2-s-p(\gamma +1)},
\lambda^{1-3p\gamma} \}) \| \chi v \| \\
& \leq C \lambda^{-\delta} \| \chi v \|,
\end{align*}
by our choice of $p$ and $s$.  
As before, applying \cite[Theorem 3]{Chr-inf-deg}, we have $\chi v =
\O ( \lambda^{-\infty}) \| v \|$, which is a contradiction.

\end{proof}

\appendix

\section{Summary of results from \cite{Chr-inf-deg}}

In this appendix, we very briefly summarize Theorem 3 from
\cite{Chr-inf-deg}, which is used to produce the final contradiction
to prove Theorem \ref{T:stad-non-conc}.  
The main result is that
if a $0$-Gevrey smooth partially rectangular billiard opens ``outward'' in at least one
wing, then any $\O( \lambda^{-\epsilon})$ quasimode must spread to
outside of any $\O(\lambda^{-\epsilon})$ neighbourhood of the
rectangular part.  This result holds for any $\epsilon>0$.

Let $\Omega \subset \reals^2$ be a planar domain as above, but with boundary in the
0-Gevrey class $\GG^0_\tau$ for $\tau < \infty$ (see \cite[Section 2.2]{Chr-inf-deg}.  Let $Y(x) = \pi +
r(x)$ be a graph parametrization of the boundary of $\Omega$ as
above.

\begin{theorem}
\label{T:billiards}
Consider the quasimode problem on $\Omega$:
\[
\begin{cases}
(-\Delta - \lambda^2) u = E(\lambda) \| u \|_{L^2}, \text{ on }
\Omega, \\
B u = 0, \text{ on } \p \Omega,
\end{cases}
\]
where $B = I$ or $B = \p_\nu$ (either Dirichlet or Neumann boundary
conditions).

Assume that $\pm r'(x)>0$ for at least one of $\pm (x \mp a) >0$ (that is, the
boundary curves ``outward'' away from the rectangular part of the
boundary for at least one side).  Fix $\epsilon>0$.  If $E(\lambda) = \O(\lambda^{-\epsilon})$ as $\lambda \to \infty$ and $\WF_{\lambda^{-1}}
u$ vanishes outside a neighbourhood of size
$\O(\lambda^{-\epsilon})$ of $R$, then $u = \O(\lambda^{-\infty})$ on
$\Omega$.

\end{theorem}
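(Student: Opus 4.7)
The plan is to reduce the 2D quasimode problem to a family of 1D semiclassical problems by separating variables after straightening the boundary, and then to deploy the 0-Gevrey semiclassical calculus of \cite{Chr-inf-deg} on each 1D problem.

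First, I would straighten the boundary near $R$ exactly as in the proof of Theorem \ref{T:stad-non-conc}. Because $r \in \GG^0_\tau$ vanishes identically on $[-a,a]$, the coefficients $A$, $B$ and the potential terms in the conjugated operator $\tDelta$ are $\O(\lambda^{-\infty})$ on the $\O(\lambda^{-\epsilon})$-neighborhood of $R$ in which $\WF_{\lambda^{-1}} u$ is supported. Hence on that neighborhood the equation reduces, modulo $\O(\lambda^{-\infty})$, to the exactly separable problem $(-\p_x^2 - Y^{-2}(x)\p_y^2 - \lambda^2)u = E(\lambda)\|u\|$.

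Next, expand $u(x,y) = \sum_m u_m(x)\phi_m(y)$ in the Dirichlet $y$-eigenbasis with eigenvalues $\mu_m = (m\pi/2)^2$. Each $u_m$ satisfies a 1D semiclassical equation $(-\p_x^2 + \mu_m Y^{-2}(x) - \lambda^2)u_m = E_m$ with $\sum_m\|E_m\|^2$ controlled. Split the modes by the relation between $\mu_m$ and $\pi^2\lambda^2$: for $\mu_m \gg \pi^2\lambda^2$ the 1D operator is uniformly elliptic and $u_m = \O(\lambda^{-\infty})$ by standard semiclassical elliptic regularity; for $\mu_m \ll \pi^2\lambda^2$ the equation is of classically allowed type and WF-localization combined with propagation of singularities forces $u_m = \O(\lambda^{-\infty})$.

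The decisive band is the resonant one, $|\mu_m - \pi^2\lambda^2| = \O(1)$, corresponding to the bouncing-ball energy. Here the effective potential $V_m(x) = \mu_m Y^{-2}(x)$ equals $\lambda^2$ on $[-a,a]$ and, by the opening-outward hypothesis $\pm r'(x)>0$, strictly decreases into the classically allowed region just outside $[-a,a]$ on at least one side. The 0-Gevrey flatness of $V_m - \lambda^2$ at $x = \pm a$ slows the Hamilton-flow escape from the rectangle, but a quantitative 0-Gevrey lower bound shows that bicharacteristics through any $(x_0,0)$ with $x_0 \in (-a,a)$ are pushed into $|x| > a + \O(\lambda^{-\epsilon})$ within admissible time. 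Invoking 0-Gevrey propagation of singularities from \cite{Chr-inf-deg}, $\WF_{\lambda^{-1}} u_m$ would then have to extend outside the prescribed $\O(\lambda^{-\epsilon})$-neighborhood, contradicting the hypothesis unless $u_m = \O(\lambda^{-\infty})$. The main obstacle is to quantify this propagation inside the 0-Gevrey symbol class: one needs to show that the escape rate beats the $\lambda^{-\epsilon}$ quasimode scale, which is the content of the key technical lemma of \cite{Chr-inf-deg}. Summing over the $\O(1)$ resonant modes then completes the proof.
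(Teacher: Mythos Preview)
First, note that the paper does not prove this theorem: it is stated in the Appendix only as a summary of \cite[Theorem 3]{Chr-inf-deg}, so there is no proof in the present paper to compare against.

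Your proposal has a genuine gap in the decisive resonant-mode step. You assert that ``bicharacteristics through any $(x_0,0)$ with $x_0 \in (-a,a)$ are pushed into $|x| > a + \O(\lambda^{-\epsilon})$ within admissible time.'' This is false. For the resonant mode the effective 1D potential $V_m(x) = \mu_m Y^{-2}(x)$ is \emph{identically} equal to the energy on $(-a,a)$, so $V_m'(x_0) = 0$ there, and the Hamilton vector field of $\xi^2 + V_m(x) - \lambda^2$ vanishes at every point $(x_0,0)$ with $x_0 \in (-a,a)$: both $\dot{x} = 2\xi$ and $\dot{\xi} = -V_m'(x_0)$ are zero. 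These bicharacteristics are fixed points; they never move, let alone escape the neighbourhood. Propagation of singularities says nothing at such a stationary set, and this degeneracy is precisely why the resonant band is the hard part of the problem. The 0-Gevrey flatness of $r$ at $x = \pm a$ only compounds the issue, since $V_m'$ then vanishes to infinite order there as well.

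Your overall scaffolding --- boundary straightening, reduction to the separable model via 0-Gevrey smallness of the cross terms on the $\lambda^{-\epsilon}$-support, Fourier decomposition in $y$, and the elliptic/hyperbolic/resonant trichotomy --- is the right one, and the non-resonant modes are handled essentially correctly. But the resonant case cannot be closed by classical propagation; it requires a different mechanism (for instance a direct 1D ODE or energy estimate on $[-a,a]$ exploiting that the solution there oscillates at frequency $O(1)$ rather than $O(\lambda)$, together with a matching argument in the transition zone where the outward-opening hypothesis actually enters). Invoking a ``0-Gevrey propagation of singularities from \cite{Chr-inf-deg}'' at exactly this degenerate step defers the entire difficulty to a black box, and is effectively circular if that black box is the theorem itself.
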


\bibliographystyle{alpha}
\bibliography{p-rect-bib}

\end{document}